\newtheorem{theorem}{Theorem}[section]
\newtheorem{lemma}[theorem]{Lemma}
\theoremstyle{definition}
\newtheorem{definition}[theorem]{Definition}
\newtheorem{example}[theorem]{Example}
\newtheorem{proposition}[theorem]{Proposition}
\newtheorem{corollary}[theorem]{Corollary}
\newtheorem{remark}[theorem]{Remark}
\newcommand{\e}{{\bf e}}
\newcommand{\bbA}{{\mathbb A}}
\DeclareMathOperator{\ed}{ed}
\DeclareMathOperator{\GL}{GL}
\DeclareMathOperator{\Mat}{Mat}
\DeclareMathOperator{\Et}{Et}
\DeclareMathOperator{\Sym}{S}
\DeclareMathOperator{\Gal}{Gal}
\DeclareMathOperator{\Aut}{Aut}
\DeclareMathOperator{\Lie}{Lie}
\DeclareMathOperator{\End}{End}
\DeclareMathOperator{\Hom}{Hom}
\DeclareMathOperator{\Spec}{Spec}
\DeclareMathOperator{\Alg}{Alg}
\DeclareMathOperator{\Char}{char}
\DeclareMathOperator{\trdeg}{trdeg}
\DeclareMathOperator{\Fields}{Fields}
\DeclareMathOperator{\Sets}{Sets}
\DeclareMathOperator{\Comm}{Comm}
\numberwithin{equation}{section}
\begin{document}

\title[Inseparable field extensions]{Essential dimension of inseparable field extensions}

\author{Zinovy Reichstein}
\address{Department of Mathematics\\
 University of British Columbia\\
 Vancouver, BC V6T 1Z2\\Canada}
 \email{reichst@math.ubc.ca}
\thanks{Partially supported by
 National Sciences and Engineering Research Council of
 Canada Discovery grant 253424-2017.}

\author{Abhishek Kumar Shukla}
\email{abhisheks@math.ubc.ca}
\thanks{Partially supported by a graduate fellowship from the Science and Engineering Research Board, India.}

\subjclass[2010]{Primary 12F05, 12F15, 12F20, 20G10}
	
%
%
\keywords{inseparable field extension, essential dimension, group scheme in prime characteristic}

\begin{abstract} Let $k$ be a base field, $K$ be a field containing $k$ and $L/K$ be a field extension of degree $n$. 
The essential dimension $\ed(L/K)$ over $k$ is a numerical invariant measuring ``the complexity" of $L/K$.
Of particular interest is
\[
\tau(n) = \max \{ \ed(L/K) \mid L/K \; \; \text{is a separable extension of degree $n$} \},
\]
also known as the essential dimension of the symmetric group $\Sym_n$.
The exact value of $\tau(n)$ is known only for $n \leqslant 7$.
In this paper we assume that $k$ is a field of characteristic $p > 0$ and study the essential dimension of 
inseparable extensions $L/K$. 
Here the degree $n = [L:K]$ is replaced by a pair $(n, \e)$ which accounts for the size
of the separable and the purely inseparable parts of $L/K$ respectively, and $\tau(n)$
is replaced by 
\[ \tau(n, \e) = \max \{ \ed(L/K) \mid L/K \; \; \text{is a field extension of type $(n, \e)$} \}.\]
The symmetric group $\Sym_n$ is replaced 
by a certain group scheme $G_{n,\e}$ over $k$. This group is neither finite nor smooth; nevertheless,
computing its essential dimension turns out to be easier than computing the essential 
dimension of $\Sym_n$. Our main result is a simple formula for $\tau(n, \e)$.
\end{abstract}

\maketitle

\section{Introduction}

Throughout this paper $k$ will denote a base field. All other fields will be assumed to contain $k$. 
A field extension $L/K$ of finite degree is said to descend to a subfield $K_0 \subset K$ if there extsts
a subfield $K_0 \subset L_0\subset L$ such that $L_0$ and $K$ generate $L$ and $[L_0:K_0]=[L:K]$. Equivalently, $L$ is isomorphic to
$L_0 \otimes_{K_0} K$ over $K$, as is shown in the following diagram.
\[ \xymatrix{    & L \ar@{-}[d]  \\
L_0 \ar@{-}[ur] \ar@{-}[d] & K   \\ 
K_0 \ar@{-}[ur]        &  } 
                 \]
\
	The essential dimension of $L/K$ (over $k$) is defined as
	\begin{equation*}
	     \ed(L/K)=\min\{ \trdeg(K_0/k) \mid L/K \; \; \text{descends to $K_0$ and $k \subset K_0$}  \}.
	\end{equation*}
	
Essential dimension of separable field extensions was studied in~\cite{buhler-reichstein}. Of particular interest is
\begin{equation} \label{e.tau}
\tau(n) = \max \{ \ed(L/K) \mid L/K \; \; \text{is a separable extension of degree $n$ and $k \subset K$} \}, 
\end{equation}
otherwise known as the essential dimension of the symmetric group $\Sym_n$. It is shown in~\cite{buhler-reichstein} that if $\operatorname{char}(k) = 0$, then
$\lfloor \dfrac{n}{2} \rfloor \leqslant \tau(n) \leqslant n-3$ for every $n \geqslant 5$.~\footnote{These inequalities hold for any base field $k$ of characteristic $\neq 2$.
On the other hand, the stronger lower bound of Theorem~\ref{thm.separable}, due to Duncan, is only known in characteristic $0$.}  
A.~Duncan~\cite{duncan} later strengthened the lower bound as follows.

\begin{theorem} \label{thm.separable} If $\Char(k) = 0$, then 
$\lfloor \dfrac{n + 1}{2} \rfloor \leqslant \tau(n) \leqslant n - 3$ for every $n \geqslant 6$. 
\end{theorem}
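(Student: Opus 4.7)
The plan is to prove the two inequalities separately, using the identification $\tau(n) = \ed(\Sym_n)$ from~\eqref{e.tau}.

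For the upper bound $\tau(n) \le n - 3$, I would use a sequence of Tschirnhaus substitutions. Given a separable extension $L = K(\alpha)$ of degree $n$ with minimal polynomial $f(x) = x^n + a_1 x^{n-1} + \cdots + a_n$, the translation $\alpha \mapsto \alpha - a_1/n$ produces a primitive element whose minimal polynomial is depressed ($a_1 = 0$), and a rescaling $\alpha \mapsto c\alpha$ then normalizes a second coefficient. A third reduction, via a quadratic substitution $\alpha \mapsto \alpha^2 + b\alpha + c$ with $b, c$ chosen to solve two auxiliary equations over $K$, kills one more coefficient. In characteristic $0$ each of these equations is solvable (possibly after a harmless separable base extension), and the resulting polynomial has coefficients generating a subfield $K_0 \subset K$ with $\trdeg_k K_0 \le n - 3$, so $L/K$ descends to $K_0$.

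For the lower bound $\tau(n) \ge \lfloor (n+1)/2 \rfloor$, I would split into cases by the parity of $n$. For $n = 2m$, the abelian subgroup $A = (\bbZ/2)^m \hookrightarrow \Sym_n$ generated by $m$ disjoint transpositions lies inside a Sylow $2$-subgroup $P$ of $\Sym_n$, and the Karpenko--Merkurjev theorem identifies $\ed(A)$ with the minimum dimension of a faithful representation of $A$, namely $m$. Since a faithful representation of $P$ restricts to a faithful representation of $A$, one obtains $\ed_2(\Sym_n) = \ed(P) \ge \ed(A) = m = \lfloor (n+1)/2 \rfloor$. For odd $n = 2m+1$ the Sylow $2$-subgroup of $\Sym_n$ coincides with that of $\Sym_{2m}$, so the same route yields only $m = \lfloor n/2 \rfloor$. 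Duncan supplies the extra unit by a geometric argument: he analyzes the action of $\Sym_n$ on a carefully chosen rational variety carrying a versal $\Sym_n$-torsor and rules out compressions to bases of dimension $\le \lfloor (n-1)/2 \rfloor$ via fixed-point methods, which make essential use of the characteristic zero hypothesis.

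The main obstacle I expect is the odd case of the lower bound. The upper bound is a concrete polynomial manipulation, and the even case is a clean application of Karpenko--Merkurjev. What makes the odd case hard is that all of the standard group-theoretic invariants of $\Sym_n$ at the prime $2$ (Sylow structure, elementary abelian rank, minimum dimension of a faithful representation) coincide with those of $\Sym_{n-1}$, so no purely $p$-local technique can detect the extra essential dimension. One is forced to work geometrically with versal $\Sym_n$-torsors and their birational transformations, which is the substantive content of Duncan's contribution.
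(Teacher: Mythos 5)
First, a point of reference: the paper does not prove Theorem~\ref{thm.separable} at all --- it is quoted from the literature, with the upper bound and the weaker lower bound $\lfloor n/2\rfloor$ taken from \cite{buhler-reichstein} and the improved lower bound from \cite{duncan}. So your proposal must be measured against those sources. Your high-level decomposition (a Tschirnhaus-type reduction for the upper bound; elementary abelian $2$-subgroups plus a special argument in the odd case for the lower bound) is the right skeleton, but two of your steps have genuine gaps.

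For the upper bound, the third reduction as you describe it does not work. Choosing $b,c$ in the quadratic substitution $\alpha\mapsto\alpha^2+b\alpha+c$ to kill a further coefficient requires solving equations that generically have no solution in $K$, and a ``harmless separable base extension'' is not harmless here: $\ed(L/K)$ concerns descending $L/K$ itself, and replacing $K$ by a finite extension $K'$ only controls $\ed(L'/K')$ --- this is precisely the gap between $\ed$ and $\ed_l$. (The same issue already afflicts your scaling step if you try to normalize $a_2$, since that needs a square root of $a_2$.) The actual Buhler--Reichstein argument is geometric: $\Sym_n$ acts on $(\bbP^1)^n$ commuting with the diagonal $\operatorname{PGL}_2$-action, the induced action on the $(n-3)$-dimensional quotient is generically free precisely when $n\geqslant 5$, and this quotient compresses a versal torsor; all substitutions used are fractional-linear with coefficients in $K$, so no base extension is ever needed. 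For the lower bound, the even case is fine, but in the odd case you attribute to Duncan a direct incompressibility argument valid for all odd $n$. In fact \cite{duncan} proves only the single new case $\ed(A_7)=\ed(\Sym_7)=4$, using Prokhorov's classification of finite simple groups acting on rationally connected threefolds (this is where the characteristic-zero hypothesis enters); the remaining odd $n\geqslant 9$ then follow from the inequality $\ed(\Sym_{n+2})\geqslant\ed(\Sym_n)+1$ of \cite{buhler-reichstein}, which your sketch omits entirely. Without that induction step there is no known argument ruling out low-dimensional compressions for general odd $n$.
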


This paper is a sequel to~\cite{buhler-reichstein}. Here we will assume that $\Char(k) = p > 0$ and
study inseparable field extensions $L/K$. The role of the degree, $n = [L:K]$ in the separable case will be
played by a pair $(n, \e)$. The first component of this pair is the separable degree, $n = [S: K]$, where $S$ is the separable closure of $K$ in $L$. 
The second component is the so-called type
$\e = (e_1, \dots, e_r)$ of the purely inseparable extension $[L: S]$, where $e_1 \geqslant e_2 \geqslant \dots \geqslant e_r \geqslant 1$ are integers;
see Section~\ref{sect.type} for the definition.
Note that the type $\e = (e_1, \dots, e_r)$ uniquely determines the inseparable degree 
$[L:S] = p^{e_1+ \dots + e_r}$ of $L/K$ but not conversely. By analogy with~\eqref{e.tau} it is natural to define
\begin{equation} \label{e.tau'}
\tau(n, \e) = \max \{ \ed(L/K) \mid L/K \; \; \text{is a field extension of type $(n, \e)$ and $k \subset K$} \}. 
\end{equation}
Our main result is the following.

\begin{theorem} \label{thm.main} Let $k$ be a base field of characteristic $p > 0$, $n \geqslant 1$ and
$e_1 \geqslant e_2 \geqslant \dots \geqslant e_r \geqslant 1$ be integers,
$\e = (e_1, \dots, e_r)$ and $s_i = e_1 + \dots + e_i$ for $i = 1, \dots, r$. Then
\[ \tau(n, \e) =  n \sum_{i=1}^r  p^{s_i - i e_i} \, . \]
\end{theorem}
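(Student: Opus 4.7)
The plan is to translate the computation of $\tau(n, \e)$ into a question about the essential dimension of the algebraic group scheme $G_{n,\e}$, and then to exploit its structure as a wreath product. First, I would construct $G_{n,\e}$ as the automorphism group scheme, over $k$, of a distinguished ``split'' $k$-algebra of type $(n,\e)$ --- the analogue, for inseparable extensions, of the split \'etale algebra $k^n$ whose automorphism group is $\Sym_n$. Standard descent theory then identifies isomorphism classes of field extensions of type $(n,\e)$ over $K$ with elements of $H^1(K, G_{n,\e})$, and a versal-torsor argument as in the separable case treated in \cite{buhler-reichstein} yields the identity $\tau(n, \e) = \ed_k(G_{n,\e})$, reducing the theorem to a computation of $\ed_k(G_{n,\e})$.

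Second, I would describe $G_{n,\e}$ structurally as a wreath product $H_\e \wr \Sym_n = H_\e^n \rtimes \Sym_n$, where $H_\e$ is the automorphism group scheme of a fixed purely inseparable extension of type $\e$. Here $H_\e$ is a connected infinitesimal group scheme of height at most $e_1$, built out of Frobenius kernels from a standard $p$-basis presentation of such an extension. The upper bound $\tau(n, \e) \leqslant n \sum_i p^{s_i - i e_i}$ should then come from exhibiting a faithful linear action of $G_{n,\e}$ on an affine space of dimension $n \sum_i p^{s_i - i e_i}$ that is generically free. The natural candidate is $V^{\oplus n}$, where $V$ is a representation of $H_\e$ of dimension $\sum_i p^{s_i - i e_i}$ arising from the natural module structure of a type-$\e$ extension under iterated Frobenius, with the outer $\Sym_n$ permuting the $n$ copies.

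The main obstacle is the matching lower bound $\ed_k(G_{n,\e}) \geqslant n \sum_i p^{s_i - i e_i}$. Because $G_{n,\e}$ is neither smooth nor finite \'etale, the classical fixed-point method is unavailable, and one must instead appeal to the theory of essential dimension for infinitesimal group schemes, where $\ed_k$ is controlled by the minimal dimension of a faithful representation. The crux of the proof is verifying that $\sum_i p^{s_i - i e_i}$ is indeed this minimum for $H_\e$, which requires a careful analysis of modular representations of $H_\e$ along its filtration by Frobenius kernels and a matching non-triviality of the associated cohomology class. Once the lower bound $\ed_k(H_\e) \geqslant \sum_i p^{s_i - i e_i}$ is in hand, one propagates it to the wreath product by a generic-fiber argument: a generically free $G_{n,\e}$-action restricts, over the generic point of the $\Sym_n$-quotient, to a generically free $H_\e^n$-action, so that $\ed_k(G_{n,\e}) \geqslant n \cdot \ed_k(H_\e)$. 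I expect the explicit representation-theoretic computation for the infinitesimal group $H_\e$ to be the hardest technical step.
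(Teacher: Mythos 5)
Your overall skeleton --- identifying $\tau(n,\e)$ with the essential dimension of $G_{n,\e}=\Aut_k(\Lambda_{n,\e})$ via a versal-torsor argument, and decomposing $G_{n,\e}$ as $G_{\e}^n\rtimes\Sym_n$ --- matches the paper. But two of your structural assumptions are wrong, and they undermine both halves of the argument. First, $G_{\e}=\Aut_k(\Lambda_{\e})$ is \emph{not} an infinitesimal (or even finite) group scheme: it is a connected, non-smooth group scheme of positive dimension, namely $\dim(G_{\e})=rp^{e_1+\cdots+e_r}-\sum_{i=1}^r p^{s_i-ie_i}$ (already for $r=1$, $e_1=1$ one has $\dim\Aut_k(k[x]/(x^p))=p-1$). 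Consequently a generically free representation of $G_{n,\e}$ of dimension $n\sum_i p^{s_i-ie_i}$ cannot exist, since that number is smaller than $\dim(G_{n,\e})$; your upper-bound strategy fails as stated. The correct representation-theoretic version (the paper's Section~\ref{sect.alternatives}) uses $V=\Lambda_{n,\e}^r$ of dimension $nrp^{e_1+\cdots+e_r}$ together with $\ed(G_{n,\e})\leqslant\dim V-\dim G_{n,\e}$, while the paper's primary proof of the upper bound is an elementary descent argument: in Pickert's normal form for $L/S$ the structure constants number $\sum_i p^{s_i-ie_i}$, and descending from $S$ to $K$ costs a factor of $n$ (Lemma~\ref{lem.algebra}).

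Second, and more seriously, your lower bound rests on (i) the principle that $\ed$ of $G_{\e}$ is controlled by its minimal faithful representation, which is inapplicable since $G_{\e}$ is not infinitesimal (nor trigonalizable in the relevant sense), and (ii) the super-additivity $\ed(G_{\e}^n\rtimes\Sym_n)\geqslant n\,\ed(G_{\e})$, which is not a known property of essential dimension and does not follow from restricting a generically free action over the generic point of the $\Sym_n$-quotient. The paper avoids both issues by invoking the Tossici--Vistoli inequality $\ed_p(G)\geqslant\dim(\Lie(G))-\dim(G)$, valid for arbitrary group schemes of finite type over $k$: both terms scale by $n$ under the wreath-product decomposition, and an explicit computation (realizing the endomorphism scheme of $\Lambda_{\e}$ as $\prod_{i=1}^r R_{\Lambda_{\e}/k}(\alpha_{q_i})$ and writing out its equations) gives $\dim\Lie(G_{\e})=rp^{e_1+\cdots+e_r}$ and the dimension formula above, whence the difference is exactly $\sum_i p^{s_i-ie_i}$. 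Without a substitute for this inequality, your plan has no viable route to the lower bound.
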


Some remarks are in order. 

\smallskip
(1) Theorem~\ref{thm.main} gives the exact value for $\tau(n, \e)$. This is in contrast to the 
separable case, where Theorem~\ref{thm.separable} only gives estimates and the exact value of $\tau(n)$ is unknown for any $n \geqslant 8$.

\smallskip
(2) A priori, the integers $\ed(L/K)$, $\tau(n)$ and $\tau(n, \e)$ all
depend on the base field $k$. However, Theorem~\ref{thm.main} shows that for a fixed $p = \Char(k)$,  
$\tau(n, \e)$ is independent of the choice of $k$.

\smallskip
(3) Theorem~\ref{thm.main} implies that for any inseparable extension $L/K$ of finite degree, 
\[ \ed(L/K) \leqslant \frac{1}{p} [L:K] \, ; \]
see Remark~\ref{rem.numerics}. This is again in contrast to the separable case, where Theorem~\ref{thm.separable} tells us that
there exists an extension $L/K$ of degree $n$ such that
$\ed(L/K) > \dfrac{1}{2} [L:K]$ for every odd $n \geqslant 7$ (assuming $\Char(k) = 0$).

\smallskip
(4) We will also show that the formula for $\tau(n, \e)$ remains valid if we replace essential dimension $\ed(L/K)$ in the
definition~\eqref{e.tau'} by essential dimension at $p$, $\ed_p(L/K)$; see Theorem~\ref{thm.main'}. For the definition of essential dimension at
a prime, see Section 5 in~\cite{reichstein-icm} or Section~\ref{sect.functor} below.

\smallskip
The number $\tau(n)$ has two natural interpretations. On the one hand, $\tau(n)$ is
the essential dimension of the functor $\Et_n$ which associates to a field
$K$ the set of isomorphism classes of \'etale algebras of degree $n$ over $K$.
On the other hand, $\tau(n)$ is the essential dimension of the symmetric group $\Sym_n$.
Recall that an \'etale algebra $L/K$ is a direct product $L = L_1 \times \dots \times L_m$ of separable field extensions $L_i/K$. Equivalently, an \'etale algebra 
of degree $n$ over $K$ can be thought of as a twisted $K$-form of the split algebra $k^n = k \times \dots \times k$ ($n$ times). 
The symmetric group $\Sym_n$ arises as the automorphism group of this split algebra, so that 
$\Et_n = H^1(K, \Sym_n)$; see Example~\ref{ex.etale}.

Our proof of Theorem~\ref{thm.main} relies on interpreting $\tau(n, \e)$ in a similar manner. 
Here the role of the split \'etale algebra $k^n$ will be
played by the  algebra $\Lambda_{n, \e}$, which is the direct product of $n$ copies of the truncated polynomial algebra 
\[ \Lambda_{\e} = k[x_1,\ldots,x_r]/(x_1^{p^e_1},\ldots,x_r^{p^{e_r}}). \]
Note that the $k$-algebra $\Lambda_{n, \e}$ is finite-dimensional, associative and commutative, but not semisimple.
\'Etale algebras over $K$ will get replaced by $K$-forms of $\Lambda_{n, \e}$. 
The role of the symmetric group $\Sym_n$ will be played
by the algebraic group scheme $G_{n,\e} = \Aut_k(\Lambda_{n, \e})$ over $k$. 
We will show that $\tau(n, \e)$ is the essential dimension
of $G_{n, \e}$, just like $\tau(n)$ is the essential dimension of $\Sym_n$ in the separable case.
The group scheme $G_{n, \e}$ is neither finite nor smooth; however, much to our surprise, 
computing its essential dimension turns out to be easier than computing the essential dimension of $\Sym_n$.

The remainder of this paper is structured as follows. Sections~\ref{sect.algebras} and \ref{sect.functor} contain preliminary results on finite-dimensional algebras, 
their automorphism groups and essential dimension. In Section~\ref{sect.type} we recall the structure theory of inseparable field extensions. Section~\ref{sect.versal} is devoted to versal algebras.
The upper bound of Theorem~\ref{thm.main} is proved in Section~\ref{sect.upper-bound}; alternative proofs are outlined in Section~\ref{sect.alternatives}. The lower bound of Theorem~\ref{thm.main}
is established in Section~\ref{sect.lower-bound}; our proof relies on 
the inequality~\eqref{e.tv} due to D.~Tossici~and A. Vistoli~\cite{tossici-vistoli}. 
Finally, in Section~\ref{sect.ee} we prove a stronger version of Theorem~\ref{thm.main} 
in the special case, where $n = 1$, $e_1 = \dots = e_r$, and $k$ is perfect.

\section{Finite-dimensional algebras and their automorphisms}
\label{sect.algebras}

Recall that in the Introduction we defined the essential dimension of a field extension $L/K$ of finite degree, where $K$ contains $k$.
The same definition is valid for any finite-dimensional algebra $A/K$. That is, we say that $A$ descends to a subfield $K_0$ if there exists
a $K_0$-algebra $A_0$ such that $A_0 \otimes_{K_0} K$ is isomorphic to $A$ (as a $K$-algebra). The essential dimension $\ed(A)$ is then
the minimal value of $\trdeg(K_0/k)$, where the minimum is taken over the
intermediate fields $k \subset K_0 \subset K$ such that $A$ descends to $K_0$.

Here by a $K$-algebra $A$ we mean a $K$-vector space with a bilinear 
``multiplication" map $m \colon A \times A \to A$. Later on we will primarily be interested in commutative associative
algebras with $1$, but at this stage $m$ can be arbitrary: we will not assume that $A$ is commutative, associative 
or has an identity element. (For example, one can talk of the essential dimension of a finite-dimensional Lie algebra $A/K$.) 
Recall that to each basis $x_1, \dots, x_n$ of $A$ one can associate a set of $n^3$ structure constants $c_{ij}^h \in K$, where
\begin{equation} \label{e.structure-constants}
x_i \cdot x_j = \sum_{h = 1}^n c_{ij}^h x_h \, . 
\end{equation}

\begin{lemma} \label{lem.structure-constants} Let $A$ be an $n$-dimensional $K$-algebra with structure constants $c_{ij}^h$
(relative to some $K$-basis of $A$). Suppose a subfield $K_0 \subset K$ contains $c_{ij}^h$ for every $i, j, h = 1, \dots, n$. 
Then $A$ descends to $K_0$.
In particular, $\ed(A) \leqslant \trdeg(K_0/k)$.
\end{lemma}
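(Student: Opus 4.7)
The plan is to construct the descent explicitly using the given structure constants. Let $x_1,\dots,x_n$ be the $K$-basis of $A$ relative to which the structure constants $c_{ij}^h \in K_0$ are defined. I would first define a $K_0$-algebra $A_0$ as follows: take $A_0$ to be the $K_0$-vector space with formal basis $y_1,\dots,y_n$, and define a bilinear multiplication $m_0 \colon A_0 \times A_0 \to A_0$ by declaring
\[
y_i \cdot y_j = \sum_{h=1}^n c_{ij}^h y_h
\]
and extending $K_0$-bilinearly. This makes sense precisely because all $c_{ij}^h$ lie in $K_0$.

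Next I would verify that $A_0 \otimes_{K_0} K \cong A$ as $K$-algebras. The elements $y_1 \otimes 1, \dots, y_n \otimes 1$ form a $K$-basis of $A_0 \otimes_{K_0} K$. Hence there is a unique $K$-linear isomorphism $\varphi \colon A_0 \otimes_{K_0} K \to A$ sending $y_i \otimes 1 \mapsto x_i$. To check that $\varphi$ is an algebra homomorphism, it suffices (by $K$-bilinearity of both multiplications) to check it on pairs of basis elements, and there both sides equal $\sum_h c_{ij}^h x_h$ by construction. No assumption of associativity, commutativity, or the existence of an identity is used, so the argument works for arbitrary bilinear multiplication maps, as required by the setup.

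Finally, the essential dimension bound follows immediately from the definition recalled at the start of the section: we have exhibited an intermediate field $k \subset K_0 \subset K$ to which $A$ descends, so
\[
\ed(A) \leqslant \trdeg(K_0/k).
\]
I do not anticipate any real obstacle here; the lemma is essentially a tautology once one unpacks the definitions of descent and of structure constants, and its purpose is to provide a convenient tool (bounding $\ed(A)$ by the transcendence degree of the field generated by any set of structure constants) to be invoked in later sections.
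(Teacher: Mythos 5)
Your proof is correct and is essentially identical to the paper's: both construct $A_0$ as the free $K_0$-module on a basis with multiplication defined by the given structure constants and then identify $A_0 \otimes_{K_0} K$ with $A$ via the basis. You simply spell out the verification that the base-change map is an algebra isomorphism, which the paper leaves as ``clearly''.
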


\begin{proof}
Let $A_0$ be the  $K_0$-vector space with basis $b_1, \dots, b_n$. Define the $K_0$-algebra structure on $A_0$ by~\eqref{e.structure-constants}.
Clearly $A_0 \otimes_{K_0} K = A$, and the lemma follows.
\end{proof}

The following lemma will be helpful to us in the sequel. 

\begin{lemma} \label{lem.algebra} 
Suppose $k \subset K \subset S$ are field extensions, such that $S/K$ is a separable of degree $n$.
Let $A$ be a finite-dimensional algebra over $S$. If $A$ descends to a subfield $S_0$ of $S$ such that $K(S_0) = S$, then
\[ \ed(A/K) \leqslant n \trdeg(S_0/k) \, . \]
Here $\ed(A/K)$ is the essential dimension of $A$, viewed as a $K$-algebra.
\end{lemma}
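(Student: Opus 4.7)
The plan is to apply Lemma~\ref{lem.structure-constants}: I will exhibit a $K$-basis of $A$ (viewed as a $K$-algebra by restriction of scalars) whose structure constants all lie in a subfield $K_0 \subseteq K$ with $\trdeg(K_0/k) \leqslant nm$, where $m := \trdeg(S_0/k)$; we may assume $m < \infty$. I would first observe that $K[S_0]$ is a finite-dimensional $K$-subalgebra of the field $S$, hence itself a field, so $K[S_0] = K(S_0) = S$ and one can pick $\beta_1, \ldots, \beta_n \in S_0$ forming a $K$-basis of $S$. Let $A_0$ be an $S_0$-descent of $A$ with $S_0$-basis $e_1, \ldots, e_d$ and structure constants $c_{ij}^h \in S_0$. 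Then $\{\beta_a e_j\}$ is a $K$-basis of $A$, and expanding $(\beta_a e_i)(\beta_b e_j) = \sum_h (\beta_a \beta_b c_{ij}^h) e_h$ and writing each element $\beta_a \beta_b c_{ij}^h \in S_0$ in the $\beta$-basis of $S$ shows that every $K$-structure constant of $A$ has the form $\mu_c(s)$ for some $s \in S_0$ and some $c$, where $\mu_c \colon S \to K$ is the $K$-linear $\beta_c$-coordinate projection.

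The main step, and the main obstacle, is to bound $\mu_c(S_0)$ inside $K$. For this I would pass to an algebraic closure $\bar K$ of $K$. Separability of $S/K$ gives $n$ distinct $K$-embeddings $\sigma_1, \ldots, \sigma_n \colon S \hookrightarrow \bar K$; each $F_i := \sigma_i(S_0) \subseteq \bar K$ is $k$-isomorphic to $S_0$, so $\trdeg(F_i/k) = m$, and the compositum $F := F_1 F_2 \cdots F_n$ satisfies $\trdeg(F/k) \leqslant nm$. The matrix $P := (\sigma_i(\beta_c))_{i,c}$ has entries in $F$ and is invertible, since its columns represent $\beta_c \otimes 1 \in S \otimes_K \bar K \cong \bar K^n$ in the standard basis; hence $P^{-1}$ also has entries in $F$. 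Applying each $\sigma_i$ to $s = \sum_c \mu_c(s)\beta_c$ and solving the resulting linear system yields $\mu_c(s) = \sum_i (P^{-1})_{ci}\sigma_i(s)$; for $s \in S_0$, every $\sigma_i(s)$ lies in $F_i \subseteq F$, so $\mu_c(s) \in F$. Combined with the fact that $\mu_c(s) \in K$, this gives $\mu_c(S_0) \subseteq K_0 := F \cap K$.

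Every structure constant of $A$ in the basis $\{\beta_a e_j\}$ therefore lies in $K_0$, and $\trdeg(K_0/k) \leqslant \trdeg(F/k) \leqslant nm$. Lemma~\ref{lem.structure-constants} then yields $\ed(A/K) \leqslant nm$, completing the proof.
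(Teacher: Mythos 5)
Your proof is correct. The core counting idea is the same as in the paper: both arguments take the $n$ conjugates of $S_0$ under the embeddings of $S$ into a normal (resp.\ algebraic) closure, observe that their compositum has transcendence degree at most $n\,\trdeg(S_0/k)$, and then descend $A$ to the intersection of that compositum with $K$. Where you differ is in how the descent to $K_0=F\cap K$ is actually verified. The paper forms $S_1=k(g(s)\mid s\in S_0,\ g\in\Gal(S^{\rm norm}/K))$, sets $K_1=S_1^{G}$, and asserts the descent by ``examining the diagram''\,---\,a Galois-descent step whose verification (that the $K_1$-algebra obtained really base-changes to $A$ over $K$) is left implicit. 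You instead make the descent completely explicit: you extract a $K$-basis $\beta_1,\dots,\beta_n$ of $S$ from $S_0$ (using that $K[S_0]=K(S_0)=S$), compute the structure constants of $A$ as a $K$-algebra in the basis $\{\beta_a e_j\}$, and show they lie in $F\cap K$ by inverting the matrix $(\sigma_i(\beta_c))$, whose invertibility is exactly where separability of $S/K$ enters. This reduces everything to Lemma~\ref{lem.structure-constants} and avoids any appeal to Galois descent; the price is a somewhat longer computation, but the payoff is that the one step the paper leaves to the reader is fully justified. All the individual claims check out: $K[S_0]$ is a finite-dimensional domain over $K$, hence a field equal to $S$; the columns of $P$ represent the basis $\beta_c\otimes 1$ of $S\otimes_K\bar K\cong\bar K^{\,n}$, so $P$ is invertible over $F$; and $\trdeg(F/k)\leqslant nm$ since $F$ is algebraic over the field generated by the $n$ images of a transcendence basis of $S_0$.
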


\begin{proof} By our assumption there exists an $S_0$-algebra $A_0$ such that $A = A_0 \otimes_{S_0} S$.

Denote the normal closure of $S$ over $K$ by $S^{\rm norm}$, and the associated Galois groups by $G = \Gal(S^{\rm norm}/K)$, $H = \Gal(S^{\rm norm}/S) \subset G$.
Now define $S_1 = k(g(s) \, | \, s \in S_0, \; \; g \in G)$. Choose a transcendence basis $t_1, \dots, t_d$ for $S_0$ over $k$,
where $d = \trdeg(S_0/k)$. Clearly $S_1$ is algebraic over $k(g(t_i) \, | \, g \in G, \; \; i = 1, \dots, d)$. Since $H$ fixes every element of $S$,
each $t_i$ has at most $[G:H] = n$ distinct 
translates of the form $g(t_i)$, $g \in G$. This shows that $\trdeg(S_1/k) \leqslant n d$. 

Now let $K_1 = S_1^{G} \subset K$ and $A_1 = A_0 \otimes_{K_0} K_1$. Since $S_1$ is algebraic over $K_1$, we have 
\[ \trdeg(K_1/k) = \trdeg(S_1/k) \leqslant n d . \]
Examining the diagram 
\[ \xymatrix{ A_0 \ar@{-}[d] \ar@{-}[r] & A_1 \ar@{-}[d] \ar@{-}[r]   & A \ar@{-}[d]  \\
              S_0  \ar@{-}[r]    & S_1 \ar@{-}[d] \ar@{-}[r]   & S \ar@{-}[d]  \\
                             & K_1 \ar@{-}[r]   & K,  } \]
we see that $A/K$ descends to $K_1$, and the lemma follows.                        
\end{proof}

Now let $\Lambda$ be a finite-dimensional $k$-algebra with multiplication map $m \colon \Lambda \times \Lambda \to \Lambda$.
The general linear group $\GL_k(\Lambda)$ acts on the vector space
$\Lambda^* \otimes_k \Lambda^* \otimes_k \Lambda$ of bilinear maps
$\Lambda \times \Lambda \to \Lambda$. The automorphsim group scheme $G = \Aut_k(\Lambda)$ of $\Lambda$
is defined as the stabilizer of $m$ under this action. It is a closed subgroup scheme of $\GL_k(\Lambda)$
defined over $k$. The reason we use the term ``group scheme" here, rather than ``algebraic group", is that
$G$ may not be smooth; see the Remark after Lemma III.1.1 in \cite{serre-gc}. 

%
%
\begin{proposition} \label{prop.aut-prod} Let $\Lambda$ be a commutative finite-dimensional local $k$-algebra with residue field $k$.
and $G = \Aut_k(\Lambda)$ be its automorphism group scheme.
Then the natural map \[ f \colon G^n \rtimes \Sym_n \to \Aut_k(\Lambda^n) \]
is an isomorphism. Here $G^n = G \times \dots \times G$ ($n$ times) acts on
$\Lambda^n = \Lambda \times \dots \times \Lambda$ ($n$ times) componentwise 
and $\Sym_n$ acts by permuting the factors. 
\end{proposition}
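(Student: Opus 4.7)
The plan is to prove the isomorphism at the level of functors of points: for every commutative $k$-algebra $R$, I will show that
\[ f_R \colon G(R)^n \rtimes \Sym_n(R) \to \Aut_R\bigl((\Lambda \otimes_k R)^n\bigr) \]
is bijective. Since both sides are Zariski sheaves, it suffices to treat the case where $R$ is connected, in which case $\Sym_n(R) = \Sym_n$ and the only idempotents of $R$ are $0$ and $1$.

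The key structural ingredient is a description of the idempotents of $\Lambda^n \otimes_k R$. Because $\Lambda$ is finite-dimensional and local with residue field $k$, its maximal ideal $\mathfrak{m}$ is nilpotent and $\Lambda/\mathfrak{m} = k$. Hence $\mathfrak{m} \otimes_k R$ is a nilpotent ideal of $\Lambda \otimes_k R$ with quotient $R$; since idempotents lift uniquely modulo nilpotent ideals, the idempotents of $\Lambda \otimes_k R$ coincide with those of $R$, namely $\{0,1\}$. It follows that the standard idempotents $e_1, \dots, e_n \in (\Lambda \otimes_k R)^n$, where $e_i$ has a $1$ in the $i$-th slot and $0$ elsewhere, form (up to order) the unique complete orthogonal system of nonzero idempotents of length $n$. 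Given an $R$-algebra automorphism $\phi$, the tuple $(\phi(e_1), \dots, \phi(e_n))$ is such a system, so there is a unique $\sigma \in \Sym_n$ with $\phi(e_i) = e_{\sigma(i)}$ for all $i$. After composing $\phi$ with the factor-permutation automorphism induced by $\sigma^{-1}$, we may assume that $\phi$ fixes each $e_i$; then $\phi$ preserves every summand $e_i(\Lambda \otimes_k R)^n \cong \Lambda \otimes_k R$, restricting to an element $g_i \in G(R)$ on the $i$-th factor. The tuple $\bigl((g_1, \dots, g_n), \sigma\bigr) \in G(R)^n \rtimes \Sym_n$ is the unique preimage of $\phi$ under $f_R$, giving bijectivity.

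The main obstacle is the idempotent-lifting step: one must ensure that no unexpected idempotents appear in $\Lambda \otimes_k R$ after base change. This is precisely where the hypotheses that $\Lambda$ be local \emph{and} have residue field equal to $k$ enter in an essential way; without them, twists defined over $R$ could produce automorphisms of $\Lambda^n \otimes_k R$ that lie outside the image of $f_R$. Once this rigidity is in place, the decomposition of an arbitrary automorphism into a permutation part and a componentwise part follows essentially formally.
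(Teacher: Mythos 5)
Your argument is correct and is essentially the paper's own proof: the heart in both cases is the observation that, because $\Lambda$ is finite-dimensional local with residue field $k$, the idempotents of $\Lambda\otimes_k R$ are exactly those of $R$ (nilpotence of $\mathfrak m\otimes_k R$ plus unique lifting of idempotents), so any automorphism of $\Lambda_R^n$ permutes the standard idempotents and is therefore a permutation composed with a componentwise automorphism. The only cosmetic difference is that you reduce to connected $R$ (which requires the standard passage to finitely generated, hence Noetherian, $R$ to be fully rigorous), whereas the paper works with arbitrary $R$ by mapping directly to $\Aut_R(\oplus_i R\alpha_i)=\Sym_n(R)$.
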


We begin with the following simple lemma.

\begin{lemma} \label{lem.idempotent} Let $\Lambda$ be a commutative finite-dimensional local $k$-algebra with residue field $k$
and $R$ be an arbitrary commutative $k$-algebra with $1$.
Then the only idempotents of $\Lambda_R = \Lambda \otimes_k R$ are those in $R$ (more precisely in $1\otimes R$).
\end{lemma}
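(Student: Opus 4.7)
The plan is to exploit the decomposition $\Lambda = k \oplus \mathfrak{m}$, where $\mathfrak{m}$ is the (unique) maximal ideal, and to use the fact that $\mathfrak{m}$ is nilpotent. Nilpotence follows because $\Lambda$ is finite-dimensional over $k$: the descending chain $\mathfrak{m} \supseteq \mathfrak{m}^2 \supseteq \cdots$ stabilizes, and if it stabilizes at a nonzero ideal $I = \mathfrak{m} I$, Nakayama forces $I = 0$. Say $\mathfrak{m}^N = 0$.

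Tensoring the $k$-linear splitting $\Lambda = k \oplus \mathfrak{m}$ with $R$ gives an $R$-module decomposition
\[
\Lambda_R \;=\; (1 \otimes R) \;\oplus\; (\mathfrak{m} \otimes_k R).
\]
The summand $J := \mathfrak{m} \otimes_k R$ is an ideal of $\Lambda_R$, and every element of $J$ is nilpotent: an arbitrary $n = \sum_i m_i \otimes r_i \in J$ satisfies $n^N \in \mathfrak{m}^N \otimes_k R = 0$ upon expanding. So $J$ is a nil ideal.

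Given an idempotent $e \in \Lambda_R$, I would write it uniquely as $e = a + n$ with $a \in 1 \otimes R$ and $n \in J$. Expanding $e^2 = e$ and projecting onto the two summands yields $a^2 = a$ in $R$ (so $a$ is itself an idempotent) and the identity $n(1 - 2a - n) = 0$ in $\Lambda_R$. The goal reduces to showing $n = 0$.

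For this final step, I would observe that because $a \in R$ is idempotent, $1 - 2a$ squares to $1$ and is therefore a unit in $R$; since $n \in J$ is nilpotent, $1 - 2a - n$ is a unit in $\Lambda_R$ (a unit minus a nilpotent is always a unit in a commutative ring with $1$). Multiplying the relation $n(1 - 2a - n) = 0$ on the right by the inverse of $1 - 2a - n$ gives $n = 0$, so $e = a \in 1 \otimes R$, as required. I do not expect any serious obstacle; the only steps that need a small amount of care are the nilpotence of $\mathfrak{m}$ (and hence of $J$) in the generality where $R$ is an arbitrary commutative $k$-algebra, and the standard fact that a unit plus a nilpotent remains a unit.
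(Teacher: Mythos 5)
Your proof is correct and follows essentially the same route as the paper: both reduce modulo the nil ideal $\mathfrak{m}\otimes_k R$ (the kernel of $\Lambda_R\to R$) and use the uniqueness of idempotents over a nil ideal. The only difference is that you carry out by hand (the $n(1-2a-n)=0$ computation and the Nakayama argument for nilpotence of $\mathfrak{m}$) what the paper handles by citing Waterhouse and Jacobson.
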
 

\begin{proof}  By Lemma~6.2 in~\cite{waterhouse}, the maximal ideal $M$ of $\Lambda$ consists of nilpotent elements.
Tensoring the natural projection $\Lambda \to \Lambda/M \simeq k$ with $R$, we obtain a surjective homomorphism
$\Lambda_R \to R$
whose kernel again consists of nilpotent elements. By Proposition 7.14 in~\cite{JacobsonII}, every idempotent in $R$
lifts to a unique idempotent in $\Lambda_R$, and the lemma follows.
\end{proof}

\begin{proof}[Proof of Proposition~\ref{prop.aut-prod}]
Let $\alpha_i=(0,\ldots,1,\ldots,0)$ where $1$ appears in the $i^{th}$ position. 
Then $\oplus_{i = 1}^n \, R\alpha_i$ is an $R$-subalgebra of $\Lambda^n_R$.
	
For any automorphism $f \in \Aut_R(\Lambda^n_R)$ consider the orthogonal idempotents $f(\alpha_1), \dots, f(\alpha_n)$. 
The components of each $f(\alpha_i)$ are idempotents in $\Lambda_R$. By Lemma~\ref{lem.idempotent}, they lie in $R$. 
Thus, $f(\alpha_i) \in \oplus_{i = 1}^n \,  R \alpha_i$. As a result, we obtain a morphism 
	\[\Aut_R(\Lambda_R^n) \xrightarrow{\; \tau_R \;} \Aut_R(\oplus_{i = 1}^n \,  R\alpha_i)=S_n(R).   \]
(For the second equality, see, e.g., p. 59 in \cite{waterhouse}.)
These maps are functorial in $R$ and thus give rise to a morphism $\tau \colon \Aut(\Lambda^n) \to \Sym_n$ of group schemes over $k$. 
The kernel of $\tau$ is $\Aut(\Lambda)^n$, and $\tau$ clearly has a section. The lemma follows.
\end{proof}

\begin{remark} \label{rem.con-commutative1} The assumption that $\Lambda$ is commutative in
Proposition~\ref{prop.aut-prod} can be dropped, as long as we assume that the center of $\Lambda$
is a finite-dimensional local $k$-algebra with residue field $k$. The proof proceeds along similar lines, except that we 
restrict $f$ to an automorphism of the center $Z(\Lambda^n) = Z(\Lambda)^n$ and apply Lemma~\ref{lem.idempotent} to 
$Z(\Lambda)$, rather than $\Lambda$ itself. 
This more general variant of Proposition~\ref{prop.aut-prod}
will not be needed in the sequel.
\end{remark}

 \begin{remark} \label{rem.con-commutative2} On the other hand, the residue field is $k$ cannot
be dropped. For example, if $\Lambda$ is a separable field extension of $k$ of degree $d$, then $\Aut_k(\Lambda^n)$ is a twisted form
of \[ \Aut_{\overline{k}} (\Lambda^n \otimes_k \overline{k}) = \Aut_{\overline{k}}(\overline{k}^{dn}) = \Sym_{nd}. \]
Here $\overline{k}$ denotes the separable closure of $k$. Similarly, $\Aut_k(\Lambda)^n \rtimes \Sym_d$ 
is a twisted form of $(\Sym_{d})^n \rtimes \Sym_n$.  For $d, n > 1$, these
groups have different orders, so they cannot be isomorphic.
\end{remark}

\section{Essential dimension of a functor}
\label{sect.functor}

In the sequel we will need the following general notion of essential dimension, due to A.~Merkurjev~\cite{berhuy-favi}.
Let $\mathcal{F} \colon \Fields_k \to \Sets$ be a covariant functor from the category of field extensions $K/k$ to the category of sets. Here $k$ is assumed to be fixed throughout,
and $K$ ranges over all fields containing $k$. We say that an object
$a\in \mathcal{F}(K)$ descends to a subfield $K_0 \subset K$ if $a$ lies in the image of the natural restriction map $\mathcal{F}(K_0)\to \mathcal{F}(K)$. The essential dimension $\ed(a)$ of $a$ is defined as minimal value of $\trdeg(K_0/k)$, where $k \subset K_0$ and $a$ descends to $K_0$.
The essential dimension of the functor $\mathcal{F}$, denoted by $\ed(\mathcal{F})$, is the supremum of $\ed(a)$ for all $a\in F(K)$, 
and all fields $K$ in $Fields_k$. 

If $l$ is a prime, there is also a related notion of essential dimension at $l$, which we denote by $\ed_l$. For an object $a \in \mathcal{F}$, we define
$\ed_l(a)$ as the minimal value of $\ed(a')$, where $a'$ is the image of $a$ in $\mathcal{F}(K')$, and the minimum is taken over all field extensions
$K'/K$ such that the degree $[K':K]$ is finite and prime to $l$. The essential dimension $\ed_l(\mathcal{F})$ of the functor $\mathcal{F}$ at $l$ is defined as the supremum of $\ed_l(a)$
for all $a\in F(K)$ and all fields $K$ in $Fields_k$. Note that the prime $l$ in this definition is unrelated to $p = \Char(k)$; we allow both $l = p$ and $l \neq p$.

\begin{example} \label{ex.group} Let $G$ be a group scheme over a base field $k$ and 
$\mathcal{F}_G \colon K \to H^1(K, G)$ be the functor defined by 
\[ \text{$\mathcal{F}_G(K) = \{$isomorphism classes of $G$-torsors $T \to \Spec(K) \}$.} \]
Here by a torsor we mean a torsor in the flat (fppf) topology. If $G$ is smooth, then $H^1(K, G)$ is the first Galois cohomology set, as in 
\cite{serre-gc}; see Section II.1.
The essential dimension $\ed(G)$ is, by definition, $\ed(\mathcal{F}_G)$, and similarly for the essential dimension $\ed_l(G)$ of $G$ at at prime $l$.
These numerical invariants of $G$ have been extensively studied; 
see, e.g.,\cite{merkurjev-chile} or~\cite{reichstein-icm} for a survey.
\end{example}

\begin{example} \label{ex.all-algebras} Define the functor $\Alg_n \colon K \to H^1(K, G)$ by 
\[ \text{$\Alg_n(K) = \{$isomorphism classes of $n$-dimensional $K$-algebras\}.} \]
If $A$ is an $n$-dimensional dimensional algebra, and $[A]$ is its class in $\Alg_n(K)$,
then $\ed([A])$ coincides with $\ed(A)$ defined at the beginning of Section~\ref{sect.algebras}.
By Lemma~\ref{lem.structure-constants} $\ed(\Alg_n) \leqslant n^3$;
the exact value is unknown (except for very small $n$). 
\end{example}

We will now restrict our attention to certain subfunctors of $\Alg_n$ which are better understood.

\begin{definition} \label{def.type}
Let $\Lambda/k$ be a finite-dimensional algebra and $K/k$ be a field extension (not necessarily finite or separable). We say that 
an algebra $A/K$ is a $K$-{\em form} of $\Lambda$ if
there exists a field $L$ containing $K$ such that $\Lambda \otimes_k L$ is isomorphic to $A \otimes_K L$ as an $L$-algebra.
We will write
\[ \Alg_{\Lambda} \colon \Fields_k \to \Sets \]
for the functor which sends a field $K/k$ to the set of $K$-isomorphism classes of $K$-forms of $\Lambda$. 
\end{definition}

\begin{proposition} \label{prop.auto}
Let $\Lambda$ be a finite-dimensional $k$-algebra and
$G = \Aut_k(\Lambda) \subset \GL(\Lambda)$ be its automorphism group scheme. Then the functors $\Alg_{\Lambda}$ and $\mathcal{F}_G = H^1(\ast, G)$ are isomorphic. 
In particular, $\ed(\Alg_\Lambda) = \ed(G)$ and $\ed_l(\Alg_{\Lambda}) = \ed_l(G)$ for every prime $l$.
\end{proposition}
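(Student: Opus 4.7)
The plan is to construct mutually inverse natural transformations between the functors $\Alg_\Lambda$ and $\mathcal{F}_G = H^1(\ast,G)$, and then deduce the essential dimension equalities formally from this isomorphism.

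First, I would fix a field $K/k$ and produce a map $\Phi_K \colon \Alg_\Lambda(K) \to H^1(K,G)$. Given a $K$-form $A$ of $\Lambda$, consider the fppf sheaf on $\Spec K$
\[
\Isom_K(\Lambda_K, A) \colon R \; \longmapsto \; \{\text{$R$-algebra isomorphisms } \Lambda \otimes_k R \xrightarrow{\sim} A \otimes_K R\}.
\]
By definition of ``$K$-form,'' there is a field $L/K$ over which $\Lambda_L \simeq A_L$, and any faithfully flat extension trivializing this isomorphism shows $\Isom_K(\Lambda_K, A)$ is a (right) $G$-torsor in the fppf topology, since $G = \Aut_k(\Lambda)$ acts simply transitively by precomposition. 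This assignment is functorial in $K$ and gives $\Phi_K$.

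Next I would construct the reverse map $\Psi_K \colon H^1(K,G) \to \Alg_\Lambda(K)$ via fppf descent. Given a $G$-torsor $T \to \Spec K$ trivialized by a faithfully flat $K$-algebra $R$, choose a trivialization $T(R) \ni t$; the two pullbacks of $t$ to $R \otimes_K R$ differ by an element $g \in G(R \otimes_K R)$ satisfying the cocycle condition. The element $g$ acts as an $(R \otimes_K R)$-algebra automorphism of $\Lambda \otimes_k (R \otimes_K R)$, producing descent data on $\Lambda_R$ in the sense of fppf descent for quasi-coherent sheaves with additional algebra structure. Since the underlying module is free (hence its descent is effective by faithfully flat descent, cf.\ Waterhouse \S17) and the multiplication map descends compatibly, we obtain a $K$-algebra $A$ with $A_R \simeq \Lambda_R$, well defined up to $K$-isomorphism and independent of cocycle representative. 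The key verification here is that $\Phi_K$ and $\Psi_K$ are mutually inverse, which amounts to unraveling the definitions: starting from $A$, trivializing $\Isom_K(\Lambda_K,A)$ over some $R$, and applying $\Psi_K$ to the resulting cocycle recovers $A$ on the nose. The hardest technical point is that $G$ need not be smooth, so one cannot appeal to Galois cohomology as in Serre~\cite{serre-gc} \S III.1; instead one needs fppf descent throughout. This is precisely why $H^1(K,G)$ was defined using flat (fppf) torsors in Example~\ref{ex.group}.

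Finally, functoriality in $K$ of both $\Phi$ and $\Psi$ (immediate from the constructions) upgrades them to natural isomorphisms of functors $\Fields_k \to \Sets$. The equalities $\ed(\Alg_\Lambda) = \ed(G)$ and $\ed_l(\Alg_\Lambda) = \ed_l(G)$ then follow formally: an isomorphism of functors preserves the notion of ``descent to a subfield,'' and hence the essential dimension (and essential dimension at any prime $l$) of each object, and therefore the supremum defining $\ed$ and $\ed_l$.
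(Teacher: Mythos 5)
Your argument is correct and is exactly the standard forms--torsors correspondence (the $\operatorname{Isom}$-sheaf as a $G$-torsor in one direction, effectivity of fppf descent for free modules with multiplication in the other) that the paper invokes by simply citing Proposition X.2.4 of Serre's \emph{Local Fields} and Proposition III.2.2.2 of Knus, with the essential dimension equalities deduced formally in the same way. The only point worth polishing is that a trivializing field extension $L/K$ need not be of finite presentation; one should note that the $\operatorname{Isom}$-scheme is of finite type over $K$, so nonemptiness over $L$ yields a point over a finite extension of $K$, giving an honest fppf cover.
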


\begin{proof} For the proof of the first assertion, see Proposition X.2.4 in~\cite{serre-lf} or Proposition III.2.2.2 in~\cite{knus}.
The second assertion is an immediate consequence of the first, since isomorphic functors have the same essential dimension.
\end{proof}

\medskip		
\begin{example} \label{ex.etale} The $K$-forms of $\Lambda_n = k \times \dots \times k$ ($n$ times) are called \'etale algebras of degree $n$. An \'etale algebra $L/K$ of degree $n$
is a direct products of separable field extensions,
\[ \text{$L = L_1 \times \dots \times L_r$, where $\sum_{i = 1}^r [L_i:K] = n$.} \]
The functor $\Alg_{\Lambda_n}$ is usually denoted by $\Et_n$. The automorphism group $\Aut_k(\Lambda_n)$ is the symmetric group $\Sym_n$, acting
on $\Lambda_n$ by permuting the $n$ factors of $k$; see Proposition~\ref{prop.aut-prod}.
Thus
$\Et_n = H^1(K, \Sym_n)$; see, e.g., Examples 2.1 and 3.2 in~\cite{serre-ci}.
\end{example}

\section{Field extensions of type $(n, \e)$}
\label{sect.type}

Let $L/S$ be a purely inseparable extension of finite degree. 
For $x\in L$ we define the exponent of $x$ over $S$ as the smallest integer $e$ such that $x^{p^e}\in S$. 
We will denote this number by $e(x,S)$. We will say that $x\in L$ is {\em normal} in $L/S$
if $e(x) = \max\{e(y)\mid y\in L \}$. A sequence $x_1,\ldots,x_r$ in $L$ is called normal 
if each $x_i$ is normal in $L_i/L_{i-1}$ and $x_i\notin L_{i-1}$. Here $L_i=S(x_1,\ldots,x_{i-1})$ 
and $L_0=S$. If $L=S(x_1,\ldots,x_r)$, where $x_1,\ldots,x_r$ is a normal sequence in $L/S$, 
then we call $x_1,\ldots,x_r$ a {\em normal generating sequence} of $L/S$. We will say that
this sequence is {\em of type} $\e = (e_1, \dots, e_r)$ if $e_i:=e(x_i, L_{i-1})$ for each $i$. 
Here $L_i = S(x_1, \dots, x_i)$, as above. It is clear that $e_1\geqslant e_2 \geqslant \ldots \geqslant e_r$. 

\begin{proposition} {\rm (G.~Pickert}~\cite{pickert}{\rm)} \label{prop.type}
Let $L/S$ be a purely inseparable field extension of finite degree. 

\smallskip
	(a) For any generating set $\Lambda$ of $L/S$ there exists a normal generating sequence $x_1, \dots, x_r$ with each $x_i \in \Lambda$. 
	
\smallskip
	(b) If $x_1, \dots, x_r$ and $y_1, \dots, y_s$ are two normal generating sequences for $L/S$, of
	types $(e_1, \dots, e_r)$ and $(f_1, \dots, f_s)$ respectively, then $r = s$ and $e_i = f_i$ for each $i = 1, \dots, r$. 
\end{proposition}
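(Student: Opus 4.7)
For part (a), the plan is to induct on $[L:S]$. The base case $L = S$ is trivial: the empty sequence works. For the inductive step (with $L \supsetneq S$), I would first establish that some element of $\Lambda$ attains the maximum exponent $e^\ast := \max\{e(y,S) \mid y \in L\}$. Given a witness $y \in L$ with $e(y,S) = e^\ast$, write $y = P(\lambda_1, \ldots, \lambda_m)$ for a polynomial $P$ over $S$ in some $\lambda_1, \ldots, \lambda_m \in \Lambda$. If every $e(\lambda_i, S) \leq e^\ast - 1$, then $\lambda_i^{p^{e^\ast-1}} \in S$, so applying the $p^{e^\ast-1}$-power Frobenius to this identity gives $y^{p^{e^\ast-1}} \in S$, contradicting $e(y,S) = e^\ast$. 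Hence some $\lambda_i \in \Lambda$ achieves $e(\lambda_i, S) = e^\ast$; take it as $x_1$. Since $\Lambda$ still generates $L/L_1$, the induction hypothesis applied to $L/L_1$ yields $x_2, \ldots, x_r \in \Lambda$; concatenation produces the required sequence. The descending inequalities $e_1 \geq e_2 \geq \cdots \geq e_r$ are automatic: $L_{i-1} \subset L_i$ forces $\max_{y \in L} e(y, L_i) \leq \max_{y \in L} e(y, L_{i-1})$.

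For part (b), the plan is to identify $(e_1, \ldots, e_r)$ as the conjugate partition of an intrinsic sequence coming from the Frobenius filtration. Define $L^{[j]} := S \cdot L^{p^j}$, giving a decreasing chain $L = L^{[0]} \supset L^{[1]} \supset \cdots \supset L^{[e_1]} = S$ (the final equality because every element of $L$ has exponent at most $e_1$ over $S$). Each step $L^{[j-1]}/L^{[j]}$ is purely inseparable of exponent $\leq 1$, so $n_j := \log_p[L^{[j-1]}:L^{[j]}]$ is a well-defined nonnegative integer, and the tuple $(n_1, n_2, \ldots)$ depends only on the extension $L/S$. The central claim is
\[
n_j = \#\{i : e_i \geq j\} \quad \text{for all } j \geq 1,
\]
so that $(e_i)$ and $(n_j)$ are conjugate partitions. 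Since $(e_i)$ is weakly decreasing, it is uniquely recovered from the intrinsic data $(n_j)$, establishing the asserted invariance.

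The main obstacle is proving the conjugate-partition identity. Writing $L^{[j-1]} = L^{[j]}(x_1^{p^{j-1}}, \ldots, x_r^{p^{j-1}})$ via $L = S(x_1, \ldots, x_r)$, I would split the argument into two parts. First, the generators $x_i^{p^{j-1}}$ with $e_i < j$ are redundant modulo $L^{[j]}$: the condition $e_i < j$ places $x_i^{p^{j-1}} \in L_{i-1}$, and an induction on $i$ pushes it into the subfield generated by $L^{[j]}$ together with $\{x_{i'}^{p^{j-1}} : i' < i, \; e_{i'} \geq j\}$. Second, the generators with $e_i \geq j$ are $p$-independent modulo $L^{[j]}$; this is the delicate step where the normality hypothesis is essential. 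A hypothetical nontrivial $p$-relation among these effective generators modulo $L^{[j]}$ would, after appropriate Frobenius unwinding and careful tracking of the largest index $i$ at which the relation is "active," produce an element of $L$ whose exponent over $L_{i-1}$ strictly exceeds $e_i$, contradicting the fact that $x_i$ was chosen as a normal element over $L_{i-1}$.
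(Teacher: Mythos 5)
The paper itself does not prove this proposition; it defers to Propositions 6 and 8 in \cite{rasala} and to \cite{karpilovsky}. Your part (a) is correct and is essentially the standard argument: since $L=S[\Lambda]$, applying Frobenius to a polynomial expression shows that the maximal exponent is attained on $\Lambda$, and the induction on $[L:S]$ closes. Your strategy for part (b) --- recovering $(e_1,\dots,e_r)$ as the conjugate partition of the intrinsic sequence $n_j=\log_p\bigl[S(L^{p^{j-1}}):S(L^{p^{j}})\bigr]$ --- is also the route taken in the cited sources, so the skeleton is sound.

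The gap is in the ``redundancy'' half of your central claim, and the justification you sketch for it does not work. From $e_i<j$ you get $x_i^{p^{j-1}}=\bigl(x_i^{p^{e_i}}\bigr)^{p^{j-1-e_i}}$ with $x_i^{p^{e_i}}=Q(x_1,\dots,x_{i-1})$ for some polynomial $Q$ over $S$; raising this relation to the power $p^{j-1-e_i}$ expresses $x_i^{p^{j-1}}$ as a polynomial in $x_1^{p^{j-1-e_i}},\dots,x_{i-1}^{p^{j-1-e_i}}$. Since $j-1-e_i<j-1$, these are \emph{lower} powers of the generators, which generate a field containing (generally strictly) the field $L^{[j]}(x_{i'}^{p^{j-1}}: i'<i)$ you need to land in, so the proposed induction on $i$ does not close. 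What is actually required is that $Q$ can be chosen to be a polynomial in $x_1^{p^{e_i}},\dots,x_{i-1}^{p^{e_i}}$; this is Pickert's structure theorem (Theorem 1 in \cite{rasala}, invoked in Section 5 of this paper), which is the real content of the uniqueness statement, requires its own careful induction using normality, and is not a formal consequence of $x_i^{p^{j-1}}\in L_{i-1}$. Your second half ($p$-independence of the generators with $e_i\geqslant j$) is only asserted, but it is in fact unnecessary: once the upper bound $n_j\leqslant\#\{i: e_i\geqslant j\}$ is known for every $j$, the identity $\sum_j n_j=\log_p[L:S]=\sum_i e_i=\sum_j\#\{i: e_i\geqslant j\}$ forces equality term by term. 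So the one substantive thing your argument must supply, and currently does not, is the structure theorem.
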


\begin{proof} For modern proofs of both parts, see Propositions 6 and 8 in \cite{rasala} or Lemma 1.2 and Corollary 1.5 in \cite{karpilovsky}.
\end{proof}

Proposition~\ref{prop.type} allows us to talk about {\em the type} of a purely inseparable extension $L/S$. We say that $L/S$ is of type $\e = (e_1, \dots, e_r)$ if 
it admits a normal generating sequence $x_1, \dots, x_r$ of type $\e$.

Now suppose $L/K$ is an arbitrary inseparable (but not necessarily purely inseparable) field extension $L/K$ of finite degree. 
Denote the separable closure of $K$ in $L$ by $S$. We will say that $L/K$ is of type $(n, \e)$ if $[S:K] = n$ and the purely inseparable
extension $L/S$ is of type $\e$. 

\begin{remark} \label{rem.finite-field}
Note that we will assume throughout that $r \geqslant 1$, i.e., that $L/K$ is not separable. In particular, a finite field $K$ does not admit an extension of type $(n, \e)$
for any $n$ and $\e$.
\end{remark}

\begin{remark}\label{rem.becker-maclane}
It is easy to see that any proper subset of a normal generating sequence $\{x_1,\ldots,x_r\}$ of purely inseparable extension $L/K$ generates a proper subfield of $L$. In other words,
a normal generating sequence is a minimal generating set of $L/K$. By Theorem 6 in \cite{becker-maclane} we have $[L: K(L^p)]=p^r$. Here $K(L^p)$ denotes the subfield of $L$ generated by $L^p$ and $K$.
\end{remark}

\begin{lemma} \label{lem.existence} Let $n \geqslant 1$ and 
$e_1 \geqslant e_2 \geqslant \dots \geqslant e_r \geqslant 1$ be integers. Then there exist

\smallskip
(a) a separable field extension $E/F$ of degree $n$ with $k \subset F$,

(b) a field extension $L/K$ of type $(n, \e)$ with $k \subset K$ and $\e = (e_1, \dots, e_r)$.
\end{lemma}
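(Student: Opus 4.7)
The plan is to construct both extensions explicitly from indeterminates over $k$, using elementary symmetric functions for the separable piece and $p^{e_i}$-th powers of transcendentals for the inseparable piece.

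For part (a), I would take $t_1, \dots, t_n$ algebraically independent over $k$, let $a_1, \dots, a_n$ denote their elementary symmetric polynomials, and set $F = k(a_1, \dots, a_n)$ and $E = F(t_1)$. The minimal polynomial of $t_1$ over $F$ is $\prod_{i=1}^n (X - t_i)$, which has $n$ distinct roots in $k(t_1, \dots, t_n)$, so $E/F$ is separable of degree $n$.

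For part (b), I would keep the extension $E/F$ from part (a) and choose $r$ additional transcendentals $y_1, \dots, y_r$ algebraically independent over $k(t_1, \dots, t_n)$. Define
\[
K = F(y_1^{p^{e_1}}, \dots, y_r^{p^{e_r}}), \qquad S = E(y_1^{p^{e_1}}, \dots, y_r^{p^{e_r}}), \qquad L = E(y_1, \dots, y_r).
\]
Since $y_1^{p^{e_1}}, \dots, y_r^{p^{e_r}}$ are algebraically independent over $E$, the extension $K/F$ is purely transcendental and $E \cap K = F$, which gives $[S:K] = [E:F] = n$ with $S/K$ separable. By construction $L/S$ is purely inseparable, so $S$ is the separable closure of $K$ in $L$.

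The remaining point is to verify that $y_1, \dots, y_r$ forms a normal generating sequence of type $\e$ for $L/S$. Writing $L_i = S(y_1, \dots, y_i)$, the inequality $e(y_i, L_{i-1}) \leqslant e_i$ is immediate from $y_i^{p^{e_i}} \in S \subset L_{i-1}$. For the reverse inequality, note that $L_{i-1}$ is contained in $E(y_1, \dots, y_{i-1}, y_i^{p^{e_i}}, \dots, y_r^{p^{e_r}})$; if $y_i^{p^f}$ lay in this field for some $f < e_i$, then the algebraic independence of $y_1, \dots, y_r$ over $E$ would be violated. Normality at step $i$ follows because every $\xi \in L_i = L_{i-1}(y_i)$ can be written as a polynomial in $y_i$ of degree less than $p^{e_i}$ over $L_{i-1}$, and raising such an expression to the $p^{e_i}$-th power lands in $L_{i-1}$, since Frobenius commutes with the sum, sends $y_i^{jp^{e_i}}$ into $L_{i-1}$, and $L_{i-1}^{p^{e_i}} \subset L_{i-1}$. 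The main obstacle is keeping the algebraic independence bookkeeping straight; once that is done, the verification is mechanical, and the hypothesis $e_1 \geqslant \dots \geqslant e_r$ ensures the generating sequence is in the correct order.
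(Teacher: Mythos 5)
Your construction is correct and is essentially the same as the paper's: both parts build the extensions explicitly from indeterminates, with $L = E(y_1,\dots,y_r)$, $K = F(y_1^{p^{e_1}},\dots,y_r^{p^{e_r}})$ and $S = EK$ in part (b). The only (cosmetic) difference is in part (a), where the paper takes $F$ to be the fixed field of a cyclic group of order $n$ acting on $k(t_1,\dots,t_n)$, yielding a Galois extension of degree $n$, whereas you take $F = k(a_1,\dots,a_n)$ and adjoin a single root $t_1$; both give a separable extension of degree $n$.
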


In particular, this lemma shows that the maxima in definitions~\eqref{e.tau} and~\eqref{e.tau'} are taken over a non-empty set of integers. 

\begin{proof}  (a) Let $x_1, \dots, x_n$ be independent variables over $k$.
Set $E = k(x_1, \dots, x_n)$ and $F = E^C$, where $C$ is the cyclic group of order $n$ acting on $E$ by permuting the variables.
Clearly $E/F$ is a Galois (and hence, separable) extension of degree $n$.

(b) Let $E/F$ be as in part (a) and $y_1, \dots, y_r$ be independent variables over $F$. Set $L = E(y_1, \dots, y_r)$ and $K = F(z_1, \dots, z_r)$,
where $z_i = y_i^{p^{e_i}}$. 
One readily checks that $S = E(z_1, \dots, z_n)$ is the separable closure of $K$ in $L$ and $L/S$ is a purely inseparable extension of type $\e$.
\end{proof}

Now suppose $n \geqslant 1$ and $\e = (e_1, \dots, e_r)$ are as above, with $e_1 \geqslant e_2 \geqslant \dots \geqslant e_r \geqslant 1$.
The following finite-dimensional commutative $k$-algebras will play an important role in the sequel:
\begin{equation} \label{e.Lambda} \text{$\Lambda_{n, \e} = \Lambda_{\e} \times \dots \times \Lambda_{\e}$ ($n$ times), where 
$\Lambda_{\e} = k[x_1, \ldots, x_r]/(x_1^{p^{e_1}},\ldots, x_r^{p^{e_r}})$} 
\end{equation}
is a truncated polynomial algebra.  

\begin{lemma} \label{lem.uniqueness} 
$\Lambda_{n, \e}$ is isomorphic to $\Lambda_{m, {\bf f}}$ if and only if $m = n$ and $\e = {\bf f}$.
\end{lemma}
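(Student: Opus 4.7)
The ``if'' direction is trivial, so the content of the lemma is the converse: any isomorphism $\Lambda_{n,\e} \cong \Lambda_{m,{\bf f}}$ of $k$-algebras forces $(n,\e) = (m,{\bf f})$. My plan has two steps: first reduce to the local factors, then recover $\e$ from $\Lambda_\e$ via a Frobenius-theoretic invariant.

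For the first step, I would note that $\Lambda_\e$ is a local $k$-algebra with residue field $k$, its unique maximal ideal being $M_\e = (x_1, \ldots, x_r)$: any element with nonzero constant term is a unit in $\Lambda_\e$, as its nilpotent part can be inverted by a geometric series. Hence $\Lambda_{n,\e}$ decomposes as a product of $n$ indecomposable local Artinian factors, each isomorphic to $\Lambda_\e$. By the uniqueness of the decomposition of a commutative Artinian ring as a product of local rings (obtained by localization at its finitely many maximal ideals), the hypothesis $\Lambda_{n,\e} \cong \Lambda_{m,{\bf f}}$ forces $n = m$ and $\Lambda_\e \cong \Lambda_{{\bf f}}$ as $k$-algebras.

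For the second step, I would exhibit an isomorphism invariant of $\Lambda_\e$ that reconstructs $\e$. Let $M_\e^{[p^j]}$ denote the ideal of $\Lambda_\e$ generated by $\{a^{p^j} : a \in M_\e\}$. This is an intrinsic construction, so the function $j \mapsto \dim_k \Lambda_\e / M_\e^{[p^j]}$ is a $k$-algebra isomorphism invariant. Using $(a+b)^{p^j} = a^{p^j} + b^{p^j}$ in characteristic $p$, a short computation gives $M_\e^{[p^j]} = (x_1^{p^j}, \ldots, x_r^{p^j})$, so
\[ \Lambda_\e / M_\e^{[p^j]} \;\cong\; k[x_1, \ldots, x_r]/(x_1^{p^{\min(e_1,j)}}, \ldots, x_r^{p^{\min(e_r,j)}}) \]
has $k$-dimension $p^{F(j)}$, where $F(j) = \sum_{i=1}^r \min(e_i, j)$. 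The first differences $F(j) - F(j-1) = \#\{i : e_i \geqslant j\}$ (for $j \geqslant 1$) recover the conjugate partition to $\e$, and since the entries of $\e$ are arranged in decreasing order, this pins down $\e$ uniquely.

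I do not anticipate a serious obstacle. The only choice to make is the right invariant in step two; once one thinks to use the Frobenius filtration $M^{[p^j]}$, which is the natural characteristic-$p$ replacement for the $M$-adic filtration in the presence of inseparability, the rest of the argument is a routine calculation.
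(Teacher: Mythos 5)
Your proof is correct, and it diverges from the paper's in the second step. The reduction to the local factors is essentially the same in both arguments: the paper counts minimal idempotents of $\Lambda_{n,\e}$ (using its Lemma on idempotents of local algebras with residue field $k$) to get $n=m$ and an induced isomorphism $\Lambda_{\e} \cong \Lambda_{\bf f}$, which is just another way of packaging the uniqueness of the decomposition of an Artinian ring into local factors that you invoke. The real difference is in how $\e$ is recovered from the local algebra $\Lambda_{\e}$: the paper simply cites Proposition 8 of Rasala's \emph{Inseparable splitting theory} for the statement that $\Lambda_{\e} \cong \Lambda_{\bf f}$ forces $\e = {\bf f}$, whereas you reprove this from scratch via the Frobenius filtration $M_{\e}^{[p^j]}$. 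Your computation checks out: $M_{\e}^{[p^j]} = (x_1^{p^j},\ldots,x_r^{p^j})$ because $p^j$-th powers are additive in characteristic $p$ and every generator of $M_{\e}^{[p^j]}$ is a sum of $p^j$-th powers of monomials; the quotient has dimension $p^{F(j)}$ with $F(j)=\sum_i \min(e_i,j)$; and the first differences of $F$ give the conjugate partition, which determines the weakly decreasing tuple $\e$. Since a $k$-algebra isomorphism of local algebras carries maximal ideal to maximal ideal and commutes with $p^j$-th powers, the invariance is clear. What each approach buys: the paper's version is shorter but leans on an external reference for precisely the essential point (the type is an isomorphism invariant of the truncated polynomial algebra, which is the algebra-side counterpart of Pickert's uniqueness-of-type theorem), while yours makes the lemma self-contained and exhibits an explicit, computable invariant distinguishing the types. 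Both are complete proofs.
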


\begin{proof} Note that $\Lambda_{\e}$ is a finite-dimensional local $k$-algebra with residue field $k$. By Lemma~\ref{lem.idempotent},
the only idempotents in $\Lambda_{\e}$ are $0$ and $1$. This readily implies that the only idempotents in $\Lambda_{n, \e}$ are
of the form $(\epsilon_1, \dots, \epsilon_n)$, where each $\epsilon_i$ is $0$ or $1$, and the only minimal idempotents are
\[ \text{$\alpha_1 = (1, 0, \dots, 0)$, $\ldots$ , $\alpha_n = (0, \dots, 0, 1)$.} \]
(Recall that a minimal idempotent is one that cannot be written as a product of two orthogonal idempotents.)
Suppose $\Lambda_{n, \e}$ and $\Lambda_{m, {\bf f}}$ are isomorphic.
Then they have the same number of minimal idempotents; hence, $m = n$. Denote the minimal idempotents of $\Lambda_{m, {\bf f}}$
by
\[ \text{$\beta_1 = (1, 0, \dots, 0)$, $\ldots$ , $\beta_m = (0, \dots, 0, 1)$.} \]
A $k$-algebra isomorphism  
$\Lambda_{n, \e} \to \Lambda_{m, {\bf f}}$ takes $\alpha_1$ to $\beta_j$ for some $j = 1, \dots, n$
and, hence, induces a $k$-algebra isomorphism between $\alpha_1 \Lambda_{n, \e} \simeq \Lambda_{\e}$ and 
$\beta_j \Lambda_{m, {\bf f}} \simeq \Lambda_{\bf f}$. To complete the proof, we appeal to Proposition 8 in~\cite{rasala},
which asserts that $\Lambda_{\bf e}$ and $\Lambda_{\bf f}$ are isomorphic if and only if $\e = {\bf f}$.
\end{proof}

\begin{lemma} \label{lem.type} Let $L/K$ be a field extension of finite degree. Then the following are equivalent.

\smallskip
(a) $L/K$ is of type $(n, \e)$. 

\smallskip
(b) $L$ is a $K$-form of $\Lambda_{n, \e}$. In other words,
$L \otimes_K K'$ is isomorphic to $\Lambda_{n, \e} \otimes_k K'$ as an $K'$-algebra for some field extension $K'/K$.
\end{lemma}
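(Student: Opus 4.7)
The plan is to work over a sufficiently large field extension and exploit the natural decomposition arising from the separable closure $S$ of $K$ in $L$. Since $S/K$ is separable of degree $n$, for the algebraic closure $\overline{K}$ of $K$ we have $S \otimes_K \overline{K} \cong \overline{K}^{\, n}$, indexed by the $n$ distinct $K$-embeddings $\sigma\colon S \hookrightarrow \overline{K}$; hence
\[
L \otimes_K \overline{K} \;\cong\; L \otimes_S (S \otimes_K \overline{K}) \;\cong\; \prod_{\sigma} L \otimes_{S, \sigma} \overline{K},
\]
while $\Lambda_{n, \e} \otimes_k \overline{K} \cong (\Lambda_\e \otimes_k \overline{K})^n$ as $\overline{K}$-algebras. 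So on both sides we have a product of $n$ local $\overline{K}$-algebras, and the problem becomes a factor-by-factor comparison.

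For (a) $\Rightarrow$ (b): taking $K' = \overline{K}$, the task reduces to proving, for each $\sigma$, an isomorphism of local $\overline{K}$-algebras $L \otimes_{S, \sigma} \overline{K} \cong \Lambda_\e \otimes_k \overline{K} = \overline{K}[z_1, \ldots, z_r]/(z_1^{p^{e_1}}, \ldots, z_r^{p^{e_r}})$. I would fix a normal generating sequence $x_1, \ldots, x_r$ of $L/S$ of type $\e$ and induct on $i$, producing elements $z_i$ with $z_i^{p^{e_i}} = 0$ such that $L_i \otimes_{S, \sigma} \overline{K} \cong \overline{K}[z_1, \ldots, z_i]/(z_1^{p^{e_1}}, \ldots, z_i^{p^{e_i}})$. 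Writing $L_i = L_{i-1}[x_i]/(x_i^{p^{e_i}} - b_i)$ and letting $\tilde{b}_i \in T_{i-1} := L_{i-1} \otimes_{S, \sigma} \overline{K}$ be the image of $b_i \in L_{i-1}$, the inductive step amounts to finding $u \in T_{i-1}$ with $u^{p^{e_i}} = \tilde{b}_i$; then $z_i := x_i - u$ satisfies $z_i^{p^{e_i}} = b_i - \tilde{b}_i = 0$ by the Frobenius identity in characteristic $p$. I expect this to be the main obstacle. To exhibit $u$, the idea is to combine (i) perfectness of $\overline{K}$, which produces $p^{e_i}$-th roots of constants, with (ii) the normal-sequence property $L^{p^{e_i}} \subset L_{i-1}$ (together with $L^{p^{e_1}} \subset S$), which constrains the polynomial expression of $\tilde{b}_i$ in the previously constructed $z_k$'s so that, after the Frobenius simplifications available in characteristic $p$, the monomials line up to form a $p^{e_i}$-th power in $T_{i-1}$.

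For (b) $\Rightarrow$ (a): base-changing the given isomorphism further to $\overline{K}$ and invoking the decomposition above yields $\prod_\sigma L \otimes_{S, \sigma} \overline{K} \cong (\Lambda_\e \otimes_k \overline{K})^n$. Comparing minimal idempotents on the two sides and noting that minimal idempotents of $L \otimes_K \overline{K}$ correspond bijectively to $K$-embeddings $S \hookrightarrow \overline{K}$ gives $[S:K] = n$, and restricting to a single idempotent yields $A := L \otimes_S \overline{K} \cong \Lambda_\e \otimes_k \overline{K}$ as local $\overline{K}$-algebras. To recover the type of $L/S$, I would compare the $\overline{K}$-dimensions of the successive subrings $A^{p^i} \subset A$, which (because $\overline{K}$ is perfect) equal the intrinsic dimensions $[L_{(i)} : S]$ for $L_{(i)} := S \cdot L^{p^i} \subset L$; these also equal the dimensions of $(\Lambda_\e \otimes_k \overline{K})^{p^i}$, which are explicit functions of $\e$. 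The resulting sequence of integers determines the partition $\e$, and combined with the uniqueness of the type (Proposition~\ref{prop.type}(b)) forces $L/S$ to be of type $\e$.
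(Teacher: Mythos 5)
Your proof has the same skeleton as the paper's: both decompose $L\otimes_K\overline{K}$ along the separable closure $S$ into $n$ local factors $L\otimes_{S,\sigma}\overline{K}$, reduce to the purely inseparable piece, and use uniqueness of the type to finish. The real difference is in what you take as input. For (a)$\Rightarrow$(b) the paper simply cites Theorem 3 of \cite{rasala} for the isomorphism $L\otimes_S\overline{K}\simeq\Lambda_{\e}\otimes_k\overline{K}$, whereas you set out to re-derive it by induction along a normal generating sequence; for (b)$\Rightarrow$(a) the paper bootstraps from part (a) together with Lemma~\ref{lem.uniqueness}, whereas you compare the numerical invariants $\dim_{\overline{K}}(A^{p^i})$. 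The latter is a legitimate alternative, though the identity $[S(L^{p^i}):S]=\prod_j p^{\max(f_j-i,\,0)}$ for a normal generating sequence of type ${\bf f}$ is itself a statement requiring proof (it carries essentially the same content as the uniqueness results you would otherwise quote), not mere bookkeeping.

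The step that does not close as written is precisely the one you flag as the main obstacle. That $\tilde b_i$ is a $q_i$-th power of \emph{some} element of the top ring $L\otimes_{S,\sigma}\overline{K}$ is automatic ($\tilde b_i=(x_i\otimes 1)^{q_i}$) and useless; since the $q_i$-power map is far from injective on these non-reduced rings, this does not produce a root $u$ lying in $T_{i-1}$, which is what you need in order to set $z_i=x_i-u$, and the property $L^{p^{e_i}}\subseteq L_{i-1}$ that you invoke does not supply one. The input that actually makes the step work is Pickert's structure theorem (Theorem 1 in \cite{rasala}, which the paper uses as~\eqref{e.structure}): $x_i^{q_i}\in S[x_1^{q_i},\dots,x_{i-1}^{q_i}]$. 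Granting this, write $b_i=\sum_d a_d\,x_1^{q_id_1}\cdots x_{i-1}^{q_id_{i-1}}$ with $a_d\in S$; then in $T_{i-1}$ one has
\[
\tilde b_i=\Bigl(\sum_d \sigma(a_d)^{1/q_i}\,\tilde x_1^{d_1}\cdots \tilde x_{i-1}^{d_{i-1}}\Bigr)^{q_i}
\]
by additivity of Frobenius and perfectness of $\overline{K}$, and the root visibly lies in $T_{i-1}$. With this citation (or an independent proof of it) inserted, your induction becomes a complete proof of Rasala's Theorem 3 and the rest of your argument goes through.
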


\begin{proof} (a)  $\Longrightarrow$ (b): Assume $L/K$ is a field extension of type $(n, \e)$.
Let $S$ be the separable closure of $K$ in $L$ and $K'$ be an algebraic closure of $S$
(which is also an algebraic closure of $K$). Then 
\[ L \otimes_K K' = L \otimes_S (S \otimes_K K') = (L \otimes_{S} K') \times \dots \times (L \otimes_{S} K') \; \text{($n$ times).} \]
On the other hand, by \cite{rasala}, Theorem 3,
$L \otimes_S K'$ is isomorphic to $\Lambda_{\e}$ as a $K'$-algebra, and part (b) follows.

\smallskip
(b) $\Longrightarrow$ (a): Assume $L \otimes_K K'$ is isomorphic to $\Lambda_{n, \e} \otimes_k K'$ as an $K'$-algebra for some field extension $K'/K$.
After replacing $K'$ by a larger field, we may assume that $K'$ contains the normal closure of $S$ over $K$.
Since $\Lambda_{n, \e} \otimes_k K'$ is not separable over $K'$, $L$ is not separable over $K$.
Thus $L/K$ is of type $(m, {\bf f})$ for some $m \geqslant 1$ and ${\bf f} = (f_1, \dots, f_s)$ with
$f_1 \geqslant f_2 \geqslant \dots \geqslant f_s \geqslant 1$. By part (a), $L \otimes_K K''$ is isomorphic to $\Lambda_{m, {\bf f}} \otimes_k K''$
for a suitable field extension $K''/K$. After enlarging $K''$, we may assume without loss of generality that $K' \subset K''$. We conclude that
$\Lambda_{n, \e} \otimes_k K''$ is isomorphic to $\Lambda_{m, {\bf f}} \otimes_k K''$ as a $K''$-algebra. 
By Lemma~\ref{lem.uniqueness}, with $k$ replaced by $K''$, this is only possible if $(n, \e) = (m, {\bf f})$. 
\end{proof}

\section{Proof of the upper bound of Theorem~\ref{thm.main}}
\label{sect.upper-bound}

In this section we will prove the following proposition.

\begin{proposition} \label{prop.upper-bound}
Let $n \geqslant 1$ and $\e = (e_1, \dots, e_r)$, where $e_1 \geqslant \dots \geqslant e_r \geqslant 1$. 
Then
\[ \tau(n, \e) \leqslant  n  \sum_{i=1}^r  p^{s_i - i e_i}  \, . \]
\end{proposition}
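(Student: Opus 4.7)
The plan is to invoke Lemma~\ref{lem.algebra}, reducing the statement to producing a subfield $S_0 \subset S$---where $S$ is the separable closure of $K$ in $L$---such that $K(S_0) = S$, $L$ descends to $S_0$ as an $S$-algebra, and $\trdeg(S_0/k) \leq \sum_{i=1}^r p^{s_i - i e_i}$. Since by definition $[S:K] = n$ and $L/S$ is purely inseparable of type $\e$, Lemma~\ref{lem.algebra} would then give $\ed(L/K) \leq n \trdeg(S_0/k) \leq n \sum p^{s_i - i e_i}$.

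To construct $S_0$, I first fix a normal generating sequence $x_1, \ldots, x_r$ for $L/S$ (existing by Proposition~\ref{prop.type}(a)). The structure constants $\psi_i := x_i^{p^{e_i}}$ lie in $L_{i-1} = S(x_1, \ldots, x_{i-1})$, a vector space of $S$-dimension $p^{s_{i-1}}$ spanned by the monomial basis $\{x_1^{a_1} \cdots x_{i-1}^{a_{i-1}} : 0 \leq a_j < p^{e_j}\}$. Reading off naively, the $\psi_i$'s yield $\sum p^{s_{i-1}}$ coefficients in $S$, far more than the target $\sum p^{s_i - i e_i}$. The savings must come from exploiting the freedom to modify the generating sequence.

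Specifically, for each $i$ one may replace $x_i$ by $c_i x_i + z_i$, where $c_i \in S^\ast$ and $z_i \in L$ satisfies $z_i^{p^{e_i}} \in L_{i-1}$ (so the new element still has exponent $e_i$ over $L_{i-1}$). Under this substitution $\psi_i$ transforms to $c_i^{p^{e_i}} \psi_i + z_i^{p^{e_i}}$, so the admissible shifts of $\psi_i$ range over $L^{p^{e_i}} \cap L_{i-1}$---a set generally strictly larger than $L_{i-1}^{p^{e_i}}$, since it contains $\psi_i$ itself and, when $i < r$, further contributions from $p^{e_i}$-th powers involving the later generators $x_j$. By iterating these substitutions, I would reduce each $\psi_i$ to a canonical form involving only $p^{s_i - i e_i}$ essential coefficients in $S$. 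Letting $T \subset S$ be the subfield generated over $k$ by these coefficients and augmenting $T$ by finitely many algebraic elements of $S$ to ensure $K(T) = S$ (which does not affect the transcendence degree over $k$), I take $S_0 = T$ and conclude via Lemma~\ref{lem.algebra}.

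The main obstacle is justifying that the substitutions above truly bring each $\psi_i$ to the claimed canonical form with $p^{s_i - i e_i}$ essential coefficients. Explicit small-case computation---e.g., $r = 2$ with $\e = (1,1)$ in characteristic $3$---confirms that shifts by $z_i \in L_{i-1}$ alone are inadequate, and that the enhanced freedom afforded by allowing $z_i \in L$ is indispensable. I expect the rigorous argument to rely on the versal algebras constructed in Section~\ref{sect.versal}: one exhibits a versal $F$-form of $\Lambda_\e$ over a field $F$ of transcendence degree exactly $\sum p^{s_i - i e_i}$, and then shows that every purely inseparable extension of type $\e$ is obtained from it by a specialization $F \to S$ whose image lies in a subfield of $S$ of the required transcendence degree.
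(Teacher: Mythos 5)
Your overall strategy coincides with the paper's: apply Lemma~\ref{lem.algebra} after descending $L$, as an $S$-algebra, to a subfield $S_0$ generated over $k$ by the structure constants of a well-chosen normal generating sequence. But the heart of the argument --- getting the coefficient count down from the naive $\sum_i p^{s_{i-1}}$ to $\sum_i p^{s_i - ie_i}$ --- is exactly the step you leave open, and your proposed mechanism (iterated substitutions $x_i \mapsto c_i x_i + z_i$) is not the one that works. The missing ingredient is Pickert's structure theorem (Theorem~1 in~\cite{rasala}): for a normal generating sequence one automatically has $x_i^{q_i} \in S[x_1^{q_i}, \dots, x_{i-1}^{q_i}]$ with $q_i = p^{e_i}$, i.e.
\[ x_i^{q_i} = \sum a_{d_1, \dots, d_{i-1}}\, x_1^{q_i d_1} \cdots x_{i-1}^{q_i d_{i-1}}, \qquad 0 \leqslant d_j \leqslant p^{e_j - e_i} - 1, \]
so each $\psi_i$ involves only $p^{e_1-e_i}\cdots p^{e_{i-1}-e_i} = p^{s_i - ie_i}$ coefficients from the outset; no canonical-form reduction by substitutions is needed, and no appeal to versal algebras is required for the upper bound. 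Without this (or an equivalent) input your argument does not close, as you acknowledge.

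There is a second, smaller gap: to invoke Lemma~\ref{lem.algebra} you need $K(S_0) = S$, and your fix --- adjoining finitely many elements of $S$ to $T$ ``without affecting the transcendence degree over $k$'' --- is unjustified. Elements of $S$ are algebraic over $K$, not over $T$, and $K$ may have large transcendence degree over $k$; adjoining a primitive element of $S/K$ to $T$ can genuinely increase $\trdeg(T/k)$. The paper circumvents this with Lemma~\ref{lem.frobenius}: one rescales $x_1$ by an element of $S$ so that $z = x_1^{q_1}$ is a primitive element of $S/K$; since $z$ is itself the ($i=1$) structure constant, it lies in $S_0$ for free, giving $K(S_0) = S$ at no cost in transcendence degree.
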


Our proof of Proposition~\ref{prop.upper-bound} will be facilitated by the following lemma.

\begin{lemma} \label{lem.frobenius} Let $K$ be an infinite field of characteristic $p$, $q$ be a power of $p$,
$S/K$ be a separable field extension of finite degree,
and $0 \neq a \in S$. Then there exists an $s \in S$ such that $as^q$ is a primitive element for $S/K$. 
\end{lemma}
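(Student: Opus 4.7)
The plan is to reformulate primitivity in terms of embeddings into an algebraic closure and then reduce to a polynomial non-vanishing argument. Let $n = [S:K]$ and let $\sigma_1, \ldots, \sigma_n$ denote the distinct $K$-algebra embeddings of $S$ into a fixed algebraic closure $\overline{K}$. An element $t \in S$ is primitive for $S/K$ precisely when $\sigma_1(t), \ldots, \sigma_n(t)$ are pairwise distinct. Applied to $t = as^q$, I want to find $s \in S$ such that $\sigma_i(a)\sigma_i(s)^q \neq \sigma_j(a)\sigma_j(s)^q$ for every pair $i \neq j$.

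Fix a $K$-basis $\omega_1, \ldots, \omega_n$ of $S$ and write $s = x_1 \omega_1 + \cdots + x_n \omega_n$ with coordinates $x_k \in K$. Since $q$ is a power of $p = \Char(K)$, Frobenius is additive, and $\sigma_i$ is the identity on $K$, so
\[
P_{ij}(x_1, \ldots, x_n) \;:=\; \sigma_i(a)\sigma_i(s)^q - \sigma_j(a)\sigma_j(s)^q \;=\; \sum_{k=1}^{n} \bigl(\sigma_i(a \omega_k^q) - \sigma_j(a \omega_k^q)\bigr)\, x_k^q
\]
is a polynomial in $x_1, \ldots, x_n$ with coefficients in $\overline{K}$. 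If every $P_{ij}$ is a nonzero polynomial, then so is the product $Q := \prod_{i \neq j} P_{ij}$; decomposing $Q$ over a $K$-basis of a finite subextension of $\overline{K}$ containing its coefficients, some $K$-coefficient polynomial of $Q$ is nonzero, and since $K$ is infinite it does not vanish on all of $K^n$. The resulting $s \in S$ makes $as^q$ have $n$ pairwise distinct conjugates over $K$, hence a primitive element.

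The main obstacle is showing that $P_{ij}$ is a nonzero polynomial for $i \neq j$, and this is where separability of $S/K$ enters. If $P_{ij} \equiv 0$, then $\sigma_i(a \omega_k^q) = \sigma_j(a \omega_k^q)$ for every $k$, so $\sigma_i$ and $\sigma_j$ agree on $aS^q$, and by $K$-linearity on $K \cdot aS^q = a(K \cdot S^q)$. The key input is that $K \cdot S^q = S$ for any finite separable extension: given $s \in S$, the minimal polynomial of $s$ over $K \cdot S^p$ divides $X^p - s^p = (X - s)^p$, and by separability of $s$ over $K \cdot S^p$ it must equal $X - s$, placing $s \in K \cdot S^p$; iterating gives $K \cdot S^q = S$ for any power $q$ of $p$. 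Since $a \neq 0$, multiplication by $a$ is a bijection on $S$, so $K \cdot aS^q = aS = S$, forcing $\sigma_i = \sigma_j$ and contradicting $i \neq j$.
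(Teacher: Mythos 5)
Your proof is correct, but it takes a genuinely different route from the paper. The paper argues by contradiction using the fact that a finite separable extension has only finitely many intermediate fields: the non-primitive elements of $S$ then lie in a finite union of proper $K$-linear subspaces of $\bbA_K(S)$, and since $K$ is infinite and $\bbA_K(S)$ is irreducible, if every $as^q$ were non-primitive the image of $s \mapsto as^q$ would land in a single proper subfield $S_1$; taking $s=1$ gives $a \in S_1$, hence $s^q \in S_1$ for all $s$, making $S/S_1$ purely inseparable and contradicting separability. You instead characterize primitivity by the $n$ embeddings $\sigma_1,\dots,\sigma_n$ having distinct values, reduce to the non-vanishing of the product of the additive polynomials $P_{ij}$ over $K^n$, and show $P_{ij}\not\equiv 0$ via the identity $K\cdot S^q=S$ for separable extensions. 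The two arguments are cousins --- both ultimately exploit that $K$ is infinite (Zariski density of $K$-points) and that $S^q$ together with $K$ generates $S$ --- but yours avoids the finiteness of intermediate fields and the irreducibility argument, replacing them with an explicit polynomial whose non-vanishing locus contains the good choices of $s$; this makes the use of separability more transparent (it enters exactly through $K\cdot S^p=S$) and is in principle effective. One small point to tighten: after decomposing $Q=\sum_m \beta_m Q_m$ over a $K$-basis $\{\beta_m\}$ of a finite extension containing the coefficients, you should note that $Q(c)=\sum_m \beta_m Q_m(c)\neq 0$ because the $Q_m(c)$ lie in $K$ and the $\beta_m$ are $K$-linearly independent; as written you only conclude that some $Q_m$ does not vanish at $c$, which is the right idea but one sentence short of the conclusion that every $P_{ij}(c)\neq 0$.
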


\begin{proof} Assume the contrary. It is well known that there are only finitely many intermediate fields between $K$ and $S$; see~e.g.,~\cite{lang}, Theorem V.4.6.
Denote the intermediate fields properly contained in $S$ by $S_1, \dots, S_n \subsetneq S$ and let $\bbA_K(S)$ be the affine space associated to $S$.
(Here we view $S$ as a $K$-vector 
space.) The non-generators of $S/K$ may now be viewed as $K$-points of the finite union 
\[ Z = \cup_{i= 1}^n \, \bbA_K(S_i) \, . \]
Since we are assuming that every element of $S$ of the form $as^q$ is a non-generator, and $K$ is an infinite field, 
the image of the $K$-morphism $f \colon \bbA(S) \to \bbA(S)$ given by $s \to as^q$ 
lies in $Z = \cup_{i = 1}^n \, \bbA_K(S_i)$. Since $\bbA_K(S)$ is irreducible, 
we conclude that the image of $f$ lies in one of the affine subspaces $\bbA_K(S_i)$, say in
$\bbA_K(S_1)$. Equivalently, $as^q \in S_1$ for every $s \in S$. Setting $s = 1$, we see that $a \in S_1$. Dividing $as^q \in S_1$ by
$0 \neq a \in S_1$, we conclude that
$s^q \in S_1$ for every $s \in S$. Thus $S$ is purely inseparable over $S_1$, contradicting our assumption that $S/K$ is separable.
\end{proof}

\begin{proof}[Proof of Proposition~\ref{prop.upper-bound}]
Let $L/K$ be a field extension of type $(n, \e)$. 
Our goal is to show that $\ed(L/K) \leqslant  n \sum_{j=1}^r  p^{s_j - je_j}$.
By Remark~\ref{rem.finite-field}, $K$ is infinite. 

Let $S$ be the separable closure of $K$ in $L$ and $x_1, \dots, x_r$ be a normal generating sequence for 
the purely inseparable extension $L/S$ of type $\e$. Set $q_i = p^{e_i}$. Recall that by the definition
of normal sequence, $x_1^{q_1} \in S$. We are free to replace $x_1$ 
by $x_1 s$ for any $0 \neq s \in S$; clearly $x_1s, x_2, \dots, x_r$ is another normal generating sequence.
By Lemma~\ref{lem.frobenius}, we may choose $s \in S$ so that  $(x_1 s)^{q_1}$ is a primitive element for $S/K$.
In other words, we may assume without loss of generality that $z = x_1^{q_1}$ is a primitive element for 
$S/K$.

By the structure theorem of Pickert, each $x_i^{q_i}$ lies in $S[x_1^{q_i}, \dots, x_{i-1}^{q_i}]$, where $q_i = p^{e_i}$;
see Theorem 1 in~\cite{rasala}. In other words, for each $i = 1, \dots, r$, 
\begin{equation} \label{e.structure} x_i^{q_i} = \sum a_{d_{1}, \dots, d_{i-1}}  x_1^{q_i d_1} \dots x_{i-1}^{q_i d_{i-1}}
\end{equation}
for some for some $a_{d_1, \dots, d_{i-1}} \in S$. Here the sum is taken over all integers $d_1, \dots, d_{i-1}$ 
between $0$ and $p^{e_j - e_i} - 1$. 
By Lemma~\ref{lem.structure-constants}, $L$ (viewed as an $S$-algebra), descends to
\[ S_0 = k(a_{d_1, \dots, d_{i-1}} \, | \, \text{$i = 1, \dots, r$ and $ 0 \leqslant d_j \leqslant p^{e_j - e_i} - 1$} ) \, . \]
Note that for each $i = 1, \dots, r$, there are exactly 
\[ p^{e_1 - e_i} \cdot p^{e_2 - e_i} \cdot \ldots \cdot p^{e_{i-1} -e_i} = p^{s_i - i e_i} \]
choices of the subscripts $d_1, \ldots, d_{i-1}$. Hence, $S_0$ is generated over $k$ by $\sum_{i=1}^r  p^{s_i - i e_i}$ elements
and consequently, 
\[ \trdeg(S_0/k) \leqslant \sum_{i=1}^r  p^{s_i - i e_i}. \]
Applying Lemma~\ref{lem.algebra} with $L = A$, we see that
$\ed(L/K) \leqslant n \trdeg(S_0/k)$, and the proposition follows.
\end{proof}

\begin{remark} \label{rem.numerics} Suppose $L/K$ is an extension of type $(n, \e)$, where $e = (e_1, \dots, e_r)$.
Here, as usual, $K$ is assumed to contain the base field $k$ of characteristic $p > 0$. Dividing both sides of the inequality in Proposition~\ref{prop.upper-bound}
by $[L:K] = n p^{e_1 + \dots + e_r}$, we readily deduce that
\[ \frac{\ed(L/K)}{[L:K]} \leqslant \frac{r}{p^r} \leqslant \frac{1}{p} \, . \]
In particular, $\ed(L/K) \leqslant \dfrac{1}{2}[L:K]$ for any inseparable extension $[L:K]$ of finite degree, in any (positive) characteristic.
As we pointed out in the Introduction, this inequality fails in characteristic $0$ (even for $k = \mathbb C$).
\end{remark} 

\section{Versal algebras}
\label{sect.versal}

Let $K$ be a field and $A$ be a finite-dimensional associative $K$-algebra with $1$. Every $a \in A$ gives rise to the $K$-linear map $l_a \colon A \to A$ given by
$l_a(x) = ax$ (left multiplication by $a$). Note that $l_{ab} = l_a \cdot l_b$. It readily follows from this that $a$ has a multiplicative
inverse in $A$ if and only if $l_a$ is non-singular.

\begin{proposition} \label{prop.versal} 
Let $l$ be a prime integer and $\Lambda$ be a finite-dimensional associative $k$-algebra with $1$. 
Assume that there exists a field extension $K/k$ and a $K$-form
$A$ of $\Lambda$ such that $A$ is a division algebra. Then 

\smallskip
(a) there exists a field $K_{ver}$ containing $k$ and a $K_{ver}$-form $A_{ver}$ of $\Lambda$ such that
$\ed(A_{ver}) = \ed(\Alg_{\Lambda})$, $\ed_l(A_{ver}) = \ed_l(\Alg_{\Lambda})$, and $A_{ver}$ is a division algebra. 

\smallskip
(b) If $G$ is the automorphism group scheme of $\Lambda$, then 
\[ \ed(G) = \ed(\Alg_{\Lambda}) = \max \{ \ed(A/K) \, | \, \text{$A$ is a $K$-form of $\Lambda$ and a division algebra} \} \]
and 
\[ \ed_l(G) = \ed_l(\Alg_{\Lambda}) = \max \{ \ed_l(A/K) \, | \, \text{$A$ is a $K$-form of $\Lambda$ and a division algebra} \}. \]
\end{proposition}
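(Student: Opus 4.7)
The plan is to construct a single versal $K_{ver}$-form $A_{ver}$ of $\Lambda$ that simultaneously realizes $\ed(\Alg_\Lambda)$ (resp.\ $\ed_l(\Alg_\Lambda)$) and is a division algebra; once this is done, part (b) is immediate from Proposition~\ref{prop.auto} together with the chain
\[ \ed(G) = \ed(\Alg_\Lambda) \geqslant \max\{\ed(A/K) \mid \text{$A$ a division $K$-form of $\Lambda$}\} \geqslant \ed(A_{ver}) = \ed(\Alg_\Lambda), \]
and an analogous one at $l$.

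For the construction I would embed $G \hookrightarrow \GL(\Lambda)$ and form the quotient $X = \GL(\Lambda)/G$. Because $\GL(\Lambda)$ is a special group in the sense of Serre, the $G$-torsor $\GL(\Lambda) \to X$ is versal: every $K$-form of $\Lambda$ (equivalently, by Proposition~\ref{prop.auto}, every $G$-torsor over $K$) arises as the pullback of this torsor along some $k$-morphism $\Spec K \to X$. Setting $K_{ver} = k(X)$ and letting $A_{ver}$ be the $K_{ver}$-form of $\Lambda$ coming from the generic fiber, the equalities $\ed(A_{ver}) = \ed(\Alg_\Lambda)$ and $\ed_l(A_{ver}) = \ed_l(\Alg_\Lambda)$ then follow from a standard specialization argument: given any $A' \in \Alg_\Lambda(K)$, versality produces a morphism $\Spec K \to X$ with image $p$, and any descent of $A_{ver}$ to a subfield $K_0 \subset K_{ver}$ with $\trdeg_k K_0 = \ed(A_{ver})$ specializes along a valuation of $K_{ver}$ centered at $p$ to a descent of the fiber $\mathcal{A}_p$ over $\kappa(p)$, hence also of $A' \simeq \mathcal{A}_p \otimes_{\kappa(p)} K$, to a field of no larger transcendence degree; the same argument works at $l$ after a prime-to-$l$ base change.

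The remaining and main point is to show that $A_{ver}$ is a division algebra. Over $X$ one has a locally free sheaf of $\mathcal{O}_X$-algebras $\mathcal{A}$ whose fiber at each $x$ is the twisted form of $\Lambda$ classified by $x$; let $X^{\mathrm{div}} \subset X$ be the locus on which this fiber is a division algebra. This set is open, since its complement equals the image, under the proper projection $\bbP(\mathcal{A}) \to X$, of the closed subscheme of $\bbP(\mathcal{A})$ cut out by the vanishing of the determinant of the relative left-multiplication map $\mathcal{O}(-1) \otimes \mathcal{A} \to \mathcal{A}$; and it is nonempty, because the hypothesized division $K$-form $A$ yields a morphism $\Spec K \to X$ with some image $p$, and the injection $\mathcal{A}_p \hookrightarrow \mathcal{A}_p \otimes_{\kappa(p)} K \simeq A$ forces the finite-dimensional $\kappa(p)$-algebra $\mathcal{A}_p$ to have no zero divisors, so $p \in X^{\mathrm{div}}$. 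Consequently the generic point of $X$ lies in $X^{\mathrm{div}}$, so $A_{ver}$ is a division algebra, and both parts of the proposition follow. The main technical point to verify carefully is the openness of $X^{\mathrm{div}}$: since $G$ need not be smooth, $X$ is merely an fppf group-scheme quotient, and one must confirm that $X$ exists as a $k$-scheme and that the universal algebra bundle $\mathcal{A}$ is well defined on it; both facts follow from standard results in SGA~3.
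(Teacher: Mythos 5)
Your construction of the versal object from the quotient $X = \GL(\Lambda)/G$ and your deduction of part (b) from part (a) are fine, and differ only superficially from the paper (which takes the generic fibre of a torsor $U \to B$ obtained from a generically free linear representation of $G$, and cites the standard references for $\ed(T_{ver})=\ed(G)$ where you sketch the specialization argument). The genuine gap is in the one step that carries all the content, namely that $A_{ver}$ is a division algebra. The openness claim for $X^{\mathrm{div}}$ is false, and the proper-image argument offered for it proves the wrong statement: a point $x$ lies in the image of the degeneracy locus $Z = \{\det(l_a)=0\}\subset \bbP(\mathcal{A})$ if and only if $Z_x$ has a point over $\overline{\kappa(x)}$, whereas ``$\mathcal{A}_x$ is a division algebra'' is the assertion that $Z_x$ has no $\kappa(x)$-rational point. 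Since $\det(l_a)$ is a homogeneous form of degree $N=\dim_k\Lambda\geqslant 2$ on $\bbP^{N-1}$, the hypersurface $Z_x$ is nonempty over $\overline{\kappa(x)}$ for every $x$; hence the image of $Z$ is all of $X$ and your description would make $X^{\mathrm{div}}$ empty. Nor is the division locus open for any other reason: already for $\Lambda = k\times k$ over $k=\mathbb{Q}$, the family $\mathcal{O}_{\mathbb{G}_m}[t]/(t^2-x)$ has a field as its generic fibre while the fibres over the dense, non-closed set of squares are split, so the division locus contains the generic point without containing any neighbourhood of it. Consequently the step ``nonempty open subset of an irreducible scheme contains the generic point'' is unavailable, and your argument does not establish that $A_{ver}$ is a division algebra.

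The paper closes exactly this step by a different mechanism: the no-name lemma produces an embedding of fields $K_{ver}\hookrightarrow K(x_1,\dots,x_d)$ under which $A_{ver}\otimes_{K_{ver}}K(x_1,\dots,x_d)\simeq A\otimes_K K(x_1,\dots,x_d)$; since $A$ is a division algebra, so is $A(x_1,\dots,x_d)$, and $A_{ver}$, being a subring of it that is finite-dimensional over a field, is itself a division algebra. Some ingredient of this kind --- relating the generic fibre to the given division form over a common purely transcendental extension, rather than a topological openness argument --- is what your proof is missing. Your correct observation that $\mathcal{A}_p$ is a division algebra over $\kappa(p)$ does not by itself propagate to the generic point of $X$.
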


Here the subscript ``ver" is meant to indicate that $A_{ver}/K_{ver}$ is a versal object for $\Alg_{\Lambda} = H^1(\ast, G)$.
For a discussion of versal torsors, see Section I.5 in~\cite{serre-ci} or \cite{duncan-reichstein}.

\begin{proof} (a) We begin by constructing of a versal $G$-torsor $T_{ver} \to \Spec(K_{ver})$.
Recall that $G = \Aut_k(\Lambda)$ is defined as a closed subgroup of the general linear 
group $\GL_k(\Lambda)$.
This general linear group admits a generically free linear action on some vector space $V$ (e.g., we can take $V = \End_k(\Lambda)$, with the natural left $G$-action). 
Restricting to $G$ we obtain a generically free representation $G \to \GL(V)$. We can now choose a dense open $G$-invariant 
subscheme $U \subset V$ over $k$ which is the total space of 
a $G$-torsor $\pi \colon U \to B$; see, e.g., Example 5.4 in~\cite{serre-ci}.
Passing to the generic point of $B$, we obtain a $G$-torsor $T_{ver} \to \Spec(K_{ver})$, where $K_{ver}$ is the function field of $B$ over $k$. 
Then $\ed(T_{ver}/K_{ver}) = \ed(G)$ (see, e.g., Section 4 in \cite{berhuy-favi}) and 
$\ed_l(T_{ver}/K_{ver}) = \ed_l(G)$ (see Lemma 6.6 in~\cite{reichstein-youssin} or Theorem 4.1 in~\cite{merkurjev-chile}).

Let $T \to \Spec(K)$ be the torsor associated to the $K$-algebra $A$
and $A_{ver}$ be the $K_{ver}$-algebra associated to $T_{ver} \to \Spec(K_{ver})$
under the isomorphism between the functors $\Alg_{\Lambda}$ and $H^1( \ast, G)$ of Proposition~\ref{prop.auto}.
By the characteristic-free version of the no-name Lemma, proved in \cite{reichstein-vistoli}, Section 2, 
$T \times V$ is $G$-equivariantly birationally isomorphic to $T \times \bbA_k^d$, where $d = \dim(V)$ and
$G$ acts trivially on $\bbA_k^d$. In other words,
we have a Cartesian diagram of rational maps defined over $k$
\[ \xymatrix{  & T \times \bbA^d \ar@{-->}[r]^{\simeq} \ar@{->}[d] & T \times V \ar@{-->}[r]^{\; \text{pr}_2} &  U \ar@{->}[d] \\ 
                 \bbA_K^d \ar@{=}[r]        &  \Spec(K) \times \bbA^d \ar@{-->}[rr] &  & B.}
\]
Here all direct products are over $\Spec(k)$, and
$\text{pr}_2$ denotes the rational $G$-equivariant projection 
map taking $(t, v) \in T \times V$ to $v \in V$ for $v \in U$.
The map $\Spec(K) \times \bbA^d \dasharrow B$ in the bottom row is induced from the dominant $G$-equivariant map $T \times \bbA^d \dasharrow U$ on top. 
Passing to generic points, we obtain an inclusion of field $K_{ver} \hookrightarrow K(x_1, \dots, x_d)$ such 
that the induced map $H^1(K_v, G) \to H^1(K(x_1, \dots, x_d), G)$ sends the class of
$T_{ver} \to \Spec(K_{ver})$ to the class associated to $T \times \bbA^d \to \bbA_K^d$. Under the isomorphism of Proposition~\ref{prop.auto}
between the functors $\Alg_{\Lambda}$ and $\mathcal{F}_G = H^1(\ast, G)$, this translates to 
\[ A_{ver} \otimes_{K_{ver}} K(x_1, \dots, x_d) \simeq A \otimes_K K(x_1, \dots, x_d) \]
as $K(x_1, \dots, x_d)$-algebras. 

For simplicity we will write $A(x_1, \dots, x_d)$ in place of $A \otimes_K K(x_1, \dots, x_d)$.
Since $A$ is a division algebra, so is $A(x_1, \dots, x_d)$. Thus the linear map $l_a \colon A(x_1, \dots, x_d) \to A(x_1, \dots, x_d)$ 
is non-singular (i.e., has trivial kernel) for every $a \in A_{ver}$.
Hence, the same is true for the restriction of $l_a$ to $A_{ver}$. We conclude that $A_{ver}$ is a division algebra.
Remembering that $A_{ver}$ corresponds to $T_{ver}$ under
the isomorphism of functors between $\Alg_{\Lambda}$ and $\mathcal{F}_{G}$, we see that
\[ \ed(A_{ver}) = \ed(T_{ver}/K_{ver}) = \ed(G) = \ed(\Alg_{\Lambda}) \]
and
\[ \ed_l(A_{ver}) = \ed_l(T_{ver}/K_{ver}) = \ed_l(G) = \ed_l(\Alg_{\Lambda}) \, , \]
as desired.

\smallskip
(b) The first equality in both formulas follows from Proposition~\ref{prop.auto}, and the second from part (a).
\end{proof}

We will now revisit the finite-dimensional $k$-algebras $\Lambda_{\e}$ and $\Lambda_{n, \e} = 
\Lambda_{\e} \times \dots \times \Lambda_{\e}$ ($n$ times) defined in Section~\ref{sect.type}; see~\eqref{e.Lambda}.
We will write $G_{n, \e} = \Aut(\Lambda_{n, \e}) \subset \GL_k(\Lambda_{n, \e})$ for the automorphism group scheme of $\Lambda_{n, \e}$ and
$\Alg_{n, \e}$ for the functor $\Alg_{\Lambda_{n, \e}} \colon \Fields_k \to \Sets$. Recall that this functor 
associates to a field $K/k$ the set of isomorphism classes of $K$-forms of $\Lambda_{n, \e}$.

Replacing essential dimension by essential dimension by essential dimension at a prime $l$  in the definitions~\eqref{e.tau} and
\eqref{e.tau'} or $\tau(n)$ and $\tau(n, \e)$ respectively, we define
\[
\tau_l(n) = \max \{ \ed_l(L/K) \mid L/K \; \; \text{is a separable field extension of degree $n$ and $k \subset K$} \}. 
\]
and
\[
\tau_l(n, \e) = \max \{ \ed_l(L/K) \mid L/K \; \; \text{is a field extension of type $(n, \e)$ and $k \subset K$} \}. 
\]

\begin{corollary} \label{cor.tau} Let $l$ be a prime integer. Then

(a) $\ed(\Sym_n) = \ed(\Et_n) = \tau(n)$ and $\ed_l(\Sym_n) = \ed_l(\Et_n) = \tau_l(n)$.
Here $\Et_n$ is the functor of $n$-dimensional \'etale algebras, as in Example~\ref{ex.etale}.

\smallskip
(b) $\ed(G_{n, \e}) = \ed(\Alg_{n, \e}) = \tau(n, \e)$ and $\ed_l(G_{n, \e}) = \ed_l(\Alg_{n, \e}) = \tau_l(n, \e)$.
\end{corollary}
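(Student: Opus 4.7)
The first equalities in each of the four displayed identities, namely $\ed(\Sym_n)=\ed(\Et_n)$, $\ed_l(\Sym_n)=\ed_l(\Et_n)$, $\ed(G_{n,\e})=\ed(\Alg_{n,\e})$ and $\ed_l(G_{n,\e})=\ed_l(\Alg_{n,\e})$, are immediate consequences of Proposition~\ref{prop.auto}, since by Example~\ref{ex.etale} the symmetric group $\Sym_n$ is $\Aut_k(\Lambda_n)$ and by definition $G_{n,\e}=\Aut_k(\Lambda_{n,\e})$.

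For the second equalities the plan is to invoke Proposition~\ref{prop.versal}(b) with $\Lambda=\Lambda_n$ in part (a) and $\Lambda=\Lambda_{n,\e}$ in part (b). To do this I first need to check its hypothesis, namely that there exists a field $K/k$ and a $K$-form of $\Lambda$ which is a division algebra. For part (a) this is supplied by Lemma~\ref{lem.existence}(a): any separable field extension $E/F$ of degree $n$ is, by Example~\ref{ex.etale}, an $F$-form of $\Lambda_n$, and is a field, hence a division algebra. For part (b) Lemma~\ref{lem.existence}(b) produces a field extension $L/K$ of type $(n,\e)$, and Lemma~\ref{lem.type} identifies it as a $K$-form of $\Lambda_{n,\e}$; again it is a field, hence a division algebra.

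With the hypothesis checked, Proposition~\ref{prop.versal}(b) gives
\[
\ed(\Alg_{n,\e}) \; = \; \max\bigl\{\ed(A/K) \,\big|\, A \; \text{is a $K$-form of $\Lambda_{n,\e}$ that is a division algebra}\bigr\},
\]
and similarly with $\ed$ replaced by $\ed_l$, and with $\Lambda_{n,\e}$ replaced by $\Lambda_n$. The final step is to identify the right-hand side with $\tau(n,\e)$ (respectively $\tau(n)$, $\tau_l(n,\e)$, $\tau_l(n)$). Since $\Lambda_{n,\e}$ is commutative, every $K$-form $A$ is also commutative: if $A\otimes_K L\simeq \Lambda_{n,\e}\otimes_k L$ is commutative and $A\hookrightarrow A\otimes_K L$, then $A$ is commutative as well. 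A commutative finite-dimensional $K$-algebra with $1$ in which every non-zero element is invertible is precisely a finite field extension of $K$. Combining this with Lemma~\ref{lem.type} (respectively Example~\ref{ex.etale}), the $K$-forms of $\Lambda_{n,\e}$ that are division algebras are exactly the field extensions $L/K$ of type $(n,\e)$ (respectively the separable field extensions of degree $n$), so the maximum above coincides with $\tau(n,\e)$ (respectively $\tau(n)$). The $\ed_l$ statements are proved by the same argument, using the ``$\ed_l$'' clauses of Proposition~\ref{prop.versal}.

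I do not expect any serious obstacle here; the work has been done in Proposition~\ref{prop.auto}, Proposition~\ref{prop.versal}, Lemma~\ref{lem.existence} and Lemma~\ref{lem.type}, and the corollary amounts to assembling these results. The one small point that requires a remark, rather than just a citation, is that a commutative division algebra over $K$ is the same thing as a finite field extension of $K$; this is what allows the max in Proposition~\ref{prop.versal}(b) to be rewritten in terms of field extensions of type $(n,\e)$.
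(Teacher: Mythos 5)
Your proposal is correct and follows essentially the same route as the paper: the first equalities come from Proposition~\ref{prop.auto}, the hypothesis of Proposition~\ref{prop.versal} is verified via Lemma~\ref{lem.existence}, and the maximum over division-algebra forms is identified with $\tau(n,\e)$ (resp.\ $\tau(n)$) using Lemma~\ref{lem.type} together with the observation that a commutative division algebra is a field. The paper's own proof is just a terser version of the same assembly.
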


\begin{proof}  (a) Recall that \'etale algebra are, by definition, 
commutative and associative with identity. For such algebras ``division algebra" is the same as ``field".
By Lemma~\ref{lem.existence}(a) there exists a separable field extension $E/F$ of degree $n$ with $k \subset F$.
The desired equality follows from Proposition~\ref{prop.versal}(b).

(b) The same argument as in part (a) goes through, with part (a) of Lemma~\ref{lem.existence} replaced by part (b). 
\end{proof}

\begin{remark} The value of $\ed_l(\Sym_n)$ is known:
\[ \ed_l(\Sym_n) =  \begin{cases} \text{$\lfloor \dfrac{n}{l} \rfloor$, if $\Char(k) \neq l$, see Corollary 4.2 in \cite{meyer-reichstein},} \\
                                   \text{$1$, if $\Char(k) = l \leqslant n$, see Theorem 1 in \cite{reichstein-vistoli-p}, and} \\
\text{$0$, if $\Char(k) = l > n$, see Lemma 4.1 in~\cite{meyer-reichstein} or Theorem 1 in \cite{reichstein-vistoli-p}.}
\end{cases}
\]
\end{remark}

 \section{Conclusion of the proof of Theorem~\ref{thm.main} } 
 \label{sect.lower-bound}
 
 In this section we will prove Theorem~\ref{thm.main} in the following strengthened form. 

\begin{theorem} \label{thm.main'} Let $k$ be a base field of characteristic $p > 0$, $n \geqslant 1$ and
$e_1 \geqslant e_2 \geqslant \dots \geqslant e_r \geqslant 1$ be integers,
$\e = (e_1, \dots, e_r)$ and $s_i = e_1 + \dots + e_i$ for $i = 1, \dots, r$. Then
\[ \tau_p(n, \e) = \tau(n, \e) =  n \sum_{i=1}^r  p^{s_i - i e_i} \, . \]
\end{theorem}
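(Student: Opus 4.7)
The upper bound $\tau_p(n, \e) \leqslant \tau(n, \e) \leqslant n \sum_{i=1}^r p^{s_i - ie_i}$ is already in hand: the right-hand inequality is Proposition~\ref{prop.upper-bound}, and the left-hand one is immediate from $\ed_p \leqslant \ed$. My plan is therefore to prove the matching lower bound for $\tau_p(n, \e)$.

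The first step is to translate from field extensions to group schemes using Corollary~\ref{cor.tau}(b), which identifies $\tau_p(n, \e)$ with $\ed_p(G_{n, \e})$, and then to invoke the Tossici--Vistoli inequality~\eqref{e.tv},
\[ \ed_p(G) \;\geqslant\; \dim_k \Lie(G) - \dim G, \]
applied to $G = G_{n, \e}$. By Proposition~\ref{prop.aut-prod}, $G_{n, \e} = G_{1, \e}^n \rtimes \Sym_n$. Since $\Sym_n$ is finite and \'etale, neither $\dim$ nor $\dim \Lie$ detects it, so $\dim G_{n, \e} = n \dim G_{1, \e}$ and $\dim_k \Lie(G_{n, \e}) = n \dim_k \Lie(G_{1, \e})$. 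The whole lower bound therefore reduces to the single identity
\[ \dim_k \Lie(G_{1, \e}) - \dim G_{1, \e} \;=\; \sum_{i=1}^r p^{s_i - ie_i}. \]

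The Lie algebra side is easy: an element of $\Lie(G_{1, \e}) = \Der_k(\Lambda_{\e})$ is determined by the free choice of $D(x_i) \in \Lambda_{\e}$ for each $i$, because the would-be relations $D(x_i^{p^{e_i}}) = p^{e_i}\, x_i^{p^{e_i}-1}\, D(x_i) = 0$ vanish automatically in characteristic $p$; hence $\dim_k \Lie(G_{1, \e}) = r\, p^{s_r}$. For $\dim G_{1, \e}$ I would parametrize an endomorphism $\phi$ of $\Lambda_{\e}$ by writing $\phi(x_i) = \sum_{\alpha \neq 0} c_{i,\alpha}\, x^{\alpha}$ (with $0 \leqslant \alpha_k < p^{e_k}$) and analyze the defining equations $\phi(x_i)^{p^{e_i}} = 0$. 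Because Frobenius is additive in characteristic $p$, these expand as $\sum c_{i,\alpha}^{p^{e_i}}\, x^{p^{e_i}\alpha} = 0$, and the monomial $x^{p^{e_i}\alpha}$ survives in $\Lambda_{\e}$ precisely when $\alpha_k < p^{e_k - e_i}$ for every $k$. On the underlying reduced variety these become honest linear conditions $c_{i,\alpha}=0$; using the monotonicity $e_1 \geqslant \dots \geqslant e_r$ one counts exactly $p^{s_i - ie_i} - 1$ such ``bad'' nonzero $\alpha$ for each $i$. Subtracting these from the $r(p^{s_r}-1)$ ambient coordinates, and noting that the open ``linear part invertible'' locus is nonempty because it contains the identity, yields $\dim G_{1, \e} = r p^{s_r} - \sum_{i=1}^r p^{s_i - ie_i}$, so the required difference is indeed $\sum_{i=1}^r p^{s_i - ie_i}$.

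The main subtlety I anticipate is that $G_{1, \e}$ is not smooth: the scheme-theoretic equations $c_{i,\alpha}^{p^{e_i}} = 0$ are genuinely non-reduced, and one must verify that they cut the \emph{Krull} dimension down by exactly one per equation (rather than by more, or in some degenerate way). This is clean here because the representing ring of the endomorphism scheme factors as a tensor product of a polynomial ring in the ``free'' $c_{i,\alpha}$'s with a zero-dimensional local $k$-algebra generated by the ``bad'' $c_{i,\alpha}$'s modulo their $p^{e_i}$-th powers, so its Krull dimension is transparently the number of free coordinates. The gap between tangent-space dimension and Krull dimension is precisely what Tossici--Vistoli converts into a lower bound on $\ed_p$, so the non-smoothness of $G_{1, \e}$ is in fact the engine of the proof rather than an obstruction to it.
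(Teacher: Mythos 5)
Your proposal is correct and follows essentially the same route as the paper: upper bound from Proposition~\ref{prop.upper-bound}, reduction to $\ed_p(G_{n,\e})$ via Corollary~\ref{cor.tau}(b), the Tossici--Vistoli inequality combined with the decomposition $G_{n,\e}=G_{\e}^n\rtimes \Sym_n$, and the same dimension count for the (non-reduced) endomorphism scheme of $\Lambda_{\e}$. The only cosmetic difference is that you compute $\dim_k\Lie(G_{\e})$ directly as $\Der_k(\Lambda_{\e})$, whereas the paper reads it off as the tangent space of $X_{\e}=\End(\Lambda_{\e})$ at a $k$-point via the Jacobian criterion; both give $rp^{s_r}$.
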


By definition $\tau_p(n, \e) \leqslant \tau(n, \e)$ and by
Proposition~\ref{prop.upper-bound},  $\tau(n, \e) \leqslant  n \sum_{i=1}^r  p^{s_i - i e_i}$.
 Moreover, by Corollary~\ref{cor.tau}(b), $\tau_p(n, \e) = \ed_p(G_{n, \e})$.
It thus remains to show that
 \begin{equation} \label{e.lower-bound}
 \ed_p(G_{n, \e}) \geqslant n \sum_{i=1}^r  p^{s_i - i e_i} \, . 
 \end{equation}
 Our proof of~\eqref{e.lower-bound} will be based on the following general inequality, due to Tossici and Vistoli~\cite{tossici-vistoli}:
 \begin{equation} \label{e.tv} \ed_p(G)\geqslant \dim(\Lie(G))-\dim(G) \end{equation}
 for any group scheme $G$ of finite type over a field $k$ of characteristic $p$.  Now recall that $G_{\e} = \Aut_k(\Lambda_{\e})$, and
 $G_{n, \e} = \Aut_k(\Lambda_{n, \e})$, where $\Lambda_{n, \e} = \Lambda_{\e}^n$. Since
 $\Lambda_{\e}$ is is a commutative local $k$-algebra with residue field $k$, 
 Proposition~\ref{prop.aut-prod} tells us that $G_{n, \e} = G_{\e}^n \rtimes \Sym_n$ (see also Proposition 5.1 in \cite{salas}).
 We conclude that
 \[ \text{$\dim(G_{n, \e}) = n \dim(G_{\e})$ and $\dim(\Lie(G_{n, \e})) = n \dim(\Lie(G_{\e}))$.} \]
 Substituting these formulas into~\eqref{e.tv}, we see that the proof of the inequality~\eqref{e.lower-bound} (and thus of
 Theorem~\ref{thm.main'}) reduces to the following.
  
 \begin{proposition} \label{prop.dimensions} Let $\e = (e_1, \dots, e_r)$, where $e_1 \geqslant \dots \geqslant e_r \geqslant 1$
  are integers. Then
  
  \smallskip
  (a) $\dim(\Lie(G_{\e})) = rp^{e_1 + \dots + e_r}$, and 
  
  \smallskip
  (b) $\dim(G_{\e}) = rp^{e_1+ \dots + e_r} - \sum_{i=1}^r  p^{s_i - i e_i}$.
\end{proposition}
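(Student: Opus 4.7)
The plan is to compute both parts by working with the functor of points of $G_{\e} = \Aut_k(\Lambda_{\e})$ directly and reducing each to a straightforward monomial count in $\Lambda_{\e}$.

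For (a), I will invoke the standard identification $\Lie(G_{\e}) = \Der_k(\Lambda_{\e})$. A $k$-linear derivation $D$ of $\Lambda_{\e}$ is determined by the $r$-tuple $(D(x_1), \ldots, D(x_r)) \in \Lambda_{\e}^r$, and these values may be chosen arbitrarily: the only potential constraints come from the defining relations $x_i^{p^{e_i}} = 0$, but the Leibniz rule yields $D(x_i^{p^{e_i}}) = p^{e_i} x_i^{p^{e_i}-1} D(x_i)$, which vanishes automatically in characteristic $p$. Consequently $\Der_k(\Lambda_{\e}) \cong \Lambda_{\e}^r$ as $k$-vector spaces, giving $\dim_k \Lie(G_{\e}) = r \cdot p^{e_1+\cdots+e_r}$, as claimed in (a).

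For (b), I will realize $G_{\e}$ as a nonempty open subscheme of an explicit closed subscheme $Z$ of $\bbA(\Lambda_{\e})^r \cong \bbA^{rp^{s_r}}$, whose dimension I can then compute by inspection. By the universal property of the presentation $\Lambda_{\e} = k[x_1, \ldots, x_r]/(x_1^{p^{e_1}}, \ldots, x_r^{p^{e_r}})$, an $R$-algebra endomorphism of $\Lambda_{\e} \otimes_k R$ is the same datum as a tuple $(y_1, \ldots, y_r) \in (\Lambda_{\e} \otimes_k R)^r$ with $y_i^{p^{e_i}} = 0$ for each $i$; thus $Z$ is the scheme of endomorphisms, and $G_{\e} \subset Z$ is the open locus where the induced $R$-linear map on the rank-$p^{s_r}$ free module $\Lambda_{\e} \otimes_k R$ has unit determinant. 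Since $G_{\e}$ contains the identity, $\dim G_{\e} = \dim Z$ once I verify that $Z$ is irreducible.

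To compute $\dim Z$, I parametrize $y_i = \sum_\alpha c_{i,\alpha} x^\alpha$ over multi-indices $\alpha = (\alpha_1, \ldots, \alpha_r)$ with $0 \leqslant \alpha_j < p^{e_j}$, and use the characteristic-$p$ Frobenius identity to expand
\[
y_i^{p^{e_i}} = \sum_{\alpha} c_{i,\alpha}^{p^{e_i}}\, x^{p^{e_i}\alpha}.
\]
The monomial $x^{p^{e_i}\alpha}$ is nonzero in $\Lambda_{\e}$ precisely for the \emph{admissible} $\alpha$: using the ordering $e_1 \geqslant \cdots \geqslant e_r$, these are the $\alpha$ satisfying $\alpha_j < p^{e_j - e_i}$ for $j < i$ and $\alpha_j = 0$ for $j \geqslant i$, of which there are $\prod_{j<i} p^{e_j - e_i} = p^{s_i - ie_i}$. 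Since distinct admissible monomials are linearly independent in $\Lambda_{\e}$, the single relation $y_i^{p^{e_i}} = 0$ decomposes into the $p^{s_i - ie_i}$ equations $c_{i,\alpha}^{p^{e_i}} = 0$, each involving its own coordinate. Hence the reduced subscheme $Z_{\mathrm{red}}$ is the coordinate-linear affine subspace of $\bbA^{rp^{s_r}}$ of codimension $\sum_i p^{s_i - ie_i}$ (and is irreducible), so $Z$ is irreducible and $\dim G_{\e} = \dim Z = \dim Z_{\mathrm{red}} = r p^{s_r} - \sum_i p^{s_i - ie_i}$, which is (b). The main care needed is in justifying openness of $G_{\e} \subset Z$ and the identification $\dim Z = \dim Z_{\mathrm{red}}$; both are routine (the former by a determinant/Nakayama argument, the latter because Krull dimension ignores nilpotents), but without them the Frobenius-based monomial count does not translate into a dimension formula for $G_{\e}$ itself.
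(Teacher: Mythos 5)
Your proof is correct. For part (b) your argument is essentially the paper's: the paper likewise represents the endomorphism functor by the closed subscheme of $\bbA(\Lambda_{\e})^r$ cut out by $y_i^{q_i}=0$ (packaged there as a product of Weil restrictions $R_{\Lambda_{\e}/k}(\alpha_{q_i})$), performs the same Frobenius expansion in the monomial basis, counts the same $p^{s_i-ie_i}$ surviving coordinates whose $q_i$-th powers cut out the scheme, and realizes $G_{\e}$ as the open locus of invertible endomorphisms, so that irreducibility of the endomorphism scheme gives $\dim G_{\e}$. For part (a) you take a genuinely different route: the paper stays inside the endomorphism scheme $X_{\e}$ and computes $\dim T_{\gamma}(X_{\e})$ by the Jacobian criterion --- all defining equations are $q_l$-th powers, hence have identically vanishing differentials, so the tangent space is all of $\bbA(\Lambda_{\e})^r$ --- and then uses that $G_{\e}$ is open in $X_{\e}$; you instead identify $\Lie(G_{\e})$ with $\Der_k(\Lambda_{\e})$ and observe that the Leibniz obstruction $p^{e_i}x_i^{p^{e_i}-1}D(x_i)$ vanishes in characteristic $p$, so a derivation is freely specified by $(D(x_1),\dots,D(x_r))\in\Lambda_{\e}^r$. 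Both computations exploit the same phenomenon (differentiating a $p$-th power kills it); yours is self-contained and avoids tangent-space considerations on a non-reduced scheme, at the cost of invoking the standard identification $\Lie(\Aut_k(\Lambda))=\Der_k(\Lambda)$, whereas the paper's version extracts (a) and (b) simultaneously from one set of explicit equations.
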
 

The remainder of this section will be devoted to proving Proposition~\ref{prop.dimensions}.
We will use the following notations.
\begin{enumerate}
    \item We fix the type $\e=(e_1,\ldots,e_r)$ and set $q_i = p^{e_i}$.

\item The infinitesimal group scheme $\alpha_{p^l}$  (over any commutative ring $S$) is defined as the kernel 
of the $l$-th power of the Frobenius map, in the exact sequence:
\begin{align*}
    0\to \alpha_{p^l} \to \mathbb{G}_a \xrightarrow[]{x\to x^{p^l}} \mathbb{G}_a \to 0.
\end{align*}
We will be particularly interested in the case, where $S = \Lambda_{\e}$.

\item  Suppose $X$ is a scheme over $\Lambda$, where $\Lambda$ is a finite-dimensional commutative $k$-algebra. We will denote by
$R_{\Lambda/k}(X)$ 
 the Weil restriction of the $\Lambda$-scheme $X$ to $k$ by $R_{\Lambda/k}(X)$. For generalities on Weil restriction, see Chapter 2 
and the Appendix in~\cite{milne}.
\end{enumerate}

Let us now consider the functor $\End(\Lambda_{\e})$ of algebra endomorphisms of $\Lambda_{\e}$ from the category of commutative $k$-algebras $\Comm_k$ (with $1$ but not necessarily finite-dimensional) to the category of sets $\Sets$. 

\begin{align*}
    \Comm_k &\xrightarrow{\End(\Lambda_{\e})} \Sets\\
    R &\xrightarrow{\End(\Lambda_{\e})(R)} \End_{R-alg}(\Lambda_{\e}\otimes_k R)
\end{align*} 
 
 \begin{lemma} \label{lem.endomorphisms}
 (a) The functor $\End(\Lambda_{\e})$ is represented by an irreducible (but non-reduced) affine $k$-scheme $X_{\e}$.
 
 \smallskip
 (b) $\dim(X_{\e}) = rp^{e_1+ \dots + e_r} -\sum_{i=1}^r  p^{s_i - i e_i}$. 
 
 \smallskip
 (c) $\dim(T_{\gamma}(X_{\e})) = rp^{e_1 + \dots + e_r}$ for any $k$-point $\gamma$ of $X_{\e}$. Here $T_{\gamma}(X_{\e})$ denotes the tangent space to 
 $X_{\e}$ at $\gamma$.
 \end{lemma}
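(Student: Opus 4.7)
The plan is to describe $X_\e$ explicitly as a product of affine spaces and copies of the Frobenius kernel $\alpha_{q_i}$ (where $q_i = p^{e_i}$), from which all three assertions follow immediately. First, I would observe that an $R$-algebra endomorphism $\phi$ of $\Lambda_\e \otimes_k R$ is uniquely determined by the images $y_i := \phi(x_i)$, and that any tuple $(y_1, \ldots, y_r) \in (\Lambda_\e \otimes_k R)^r$ satisfying $y_i^{q_i} = 0$ extends to such an endomorphism, since the only relations among the generators of $\Lambda_\e$ are $x_i^{q_i} = 0$. Hence $\End(\Lambda_\e) = \prod_{i=1}^r Y_i$, where $Y_i$ is the subfunctor $R \mapsto \{y \in \Lambda_\e \otimes_k R : y^{q_i} = 0\}$ of the vector group scheme associated to $\Lambda_\e$.

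Next, I would analyze each $Y_i$ using the monomial basis $\{x^\alpha : 0 \leq \alpha_j < q_j\}$ of $\Lambda_\e$. Writing $y = \sum_\alpha c_\alpha x^\alpha$ and using additivity of the $q_i$-th power Frobenius in characteristic $p$, we get $y^{q_i} = \sum_\alpha c_\alpha^{q_i} x^{q_i \alpha}$. The monomial $x^{q_i \alpha}$ is nonzero in $\Lambda_\e$ exactly when $q_i \alpha_j < q_j$ for every $j$; a short combinatorial check identifies the surviving multi-indices as $A_i := \{\alpha : \alpha_j = 0 \text{ for } j \geq i \text{ and } 0 \leq \alpha_j < p^{e_j - e_i} \text{ for } j < i\}$, with $|A_i| = \prod_{j<i} p^{e_j - e_i} = p^{s_i - ie_i}$. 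Since these surviving monomials are $R$-linearly independent in $\Lambda_\e \otimes_k R$, the condition $y^{q_i} = 0$ decouples into independent Frobenius conditions $c_\alpha^{q_i} = 0$ for $\alpha \in A_i$, with the remaining $N - |A_i|$ coefficients (where $N = p^{e_1 + \cdots + e_r}$) unconstrained. Therefore $Y_i \cong \bbA_k^{N - |A_i|} \times \alpha_{q_i}^{|A_i|}$ and $X_\e \cong \prod_{i=1}^r Y_i$ is representable; its underlying reduced scheme is an affine space (hence irreducible and geometrically irreducible), while the factor $\alpha_{q_1}$ forces non-reducedness, since $q_1 \geq p > 1$ and $|A_1| = 1$.

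Both numerical assertions are then immediate. Since $\dim \alpha_{q_i} = 0$, summing over factors gives $\dim X_\e = rN - \sum_{i=1}^r p^{s_i - ie_i}$, which is part (b). For part (c), note that $\alpha_{q_i}$ has a unique $k$-point whose tangent space is $1$-dimensional (as $q_i > 1$); each $Y_i$ therefore has tangent space of dimension $(N-|A_i|) + |A_i| = N$ at every $k$-point, giving $\dim T_\gamma X_\e = rN = rp^{e_1 + \cdots + e_r}$. The main obstacle in this plan is the combinatorial identification of the set $A_i$ and the count $|A_i| = p^{s_i - i e_i}$; once that is in hand, representability, irreducibility, non-reducedness, and both dimension formulas follow essentially for free from the product decomposition.
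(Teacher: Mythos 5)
Your proposal is correct and follows essentially the same route as the paper: both identify $\End(\Lambda_{\e})$ with the functor of tuples $(y_1,\dots,y_r)$ satisfying $y_i^{q_i}=0$ (the paper phrases each factor as the Weil restriction $R_{\Lambda_{\e}/k}(\alpha_{q_i})$), expand in the monomial basis, use additivity of the Frobenius to see that the defining equations are $q_i$-th powers of exactly $p^{s_i-ie_i}$ coordinate functions, and deduce irreducibility, the dimension count, and the full tangent space from that. Your explicit splitting $Y_i\cong \bbA_k^{N-|A_i|}\times\alpha_{q_i}^{|A_i|}$ is just a slightly more packaged form of the same computation.
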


 \begin{proof}
An endomorphism $F$ in $\End(\Lambda_{\e})(R)$ is uniquely determined by the images 
\[ F(x_1),F(x_2),\ldots,F(x_r) \in \Lambda_{\e}(R) \]
of the generators $x_1, \dots, x_r$ of $\Lambda_{\e}$.
These elements of $\Lambda_{\e}$ satisfy $F(x_i)^{q_i}=0$. Conversely, any $r$ elements $F_1,\ldots,F_r$ in $\Lambda_{\e}\otimes R$ satisfying $F_i^{q_i}=0$, give rise to an algebra endomorphism $F$ in $\End(\Lambda_{\e})(R)$.
  We thus have
 \begin{align*}
     \End(\Lambda_{\e})(R)&=\Hom_{R-alg}(\Lambda_{\e}\otimes_k R,\Lambda_{\e}\otimes R)\\
     &\cong\alpha_{q_1}(\Lambda_{\e}\otimes R)\times \ldots \times \alpha_{q_r}(\Lambda_{\e}\otimes R)\\
     &\cong R_{\Lambda_{\e}/k}(\alpha_{q_1})(R)\times \ldots \times R_{\Lambda_{\e}/k}(\alpha_{q_r})(R)\\
     &\cong \prod_{i=1}^r R_{\Lambda_{\e}/k}(\alpha_{q_i})(R)
 \end{align*}
We conclude that $\End(\Lambda_{\e})$ is represented by an affine $k$-scheme $X_{\e} = \prod_{i=1}^r R_{\Lambda_{\e}/k}(\alpha_{q_i})$.
(Note that $X_{\e}$ is isomorphic to $\prod_{i=1}^r R_{\Lambda_{\e}/k}(\alpha_{q_i})$ as a $k$-scheme only, not as a group scheme.)
To complete the proof of the lemma it remains to establish the following assertions:

\smallskip
For any $q_l\in \{ q_1,\ldots,q_r\}$ we have that

\smallskip
(a$'$) $R_{\Lambda_{\e}/k}(\alpha_{q_l})$ is irreducible,

(b$'$) $\dim\left(R_{\Lambda_{\e}/k}(\alpha_{q_l})\right) =p^{e_1+ \dots + e_r} - p^{s_l - l e_l}$ and

\smallskip
(c$'$) $\dim(T_{\gamma}(R_{\Lambda_{\e}/k}(\alpha_{q_l})))=p^{e_1+ \dots + e_r}$ for any $k$-point $\gamma$ of $R_{\Lambda_{\e}/k}(\alpha_{q_l})$.

\smallskip
To prove (a$'$), (b$'$) and (c$'$), we will write out explicit equations for $R_{\Lambda_{\e}/k}(\alpha_{q_l})$ in $R_{\Lambda_{\e}/k}(\mathbb{A}^1) \simeq \bbA_k(\Lambda_{\e})$.
We will work in the basis $\{x_1^{i_1}x_2^{i_2}\ldots x_r^{i_r} \}$ of monomials in $\Lambda_{\e}$, where $0\leqslant i_1<q_1$, $0\leqslant i_2 < q_2$, $\ldots$, $0\leqslant i_r <q_r$.
Over $\Lambda_{\e}$, $\alpha_{q_l}$ is cut out (scheme-theoretically) in $\mathbb{A}^1$ by the single equation $X^{q_l}=0$, where $X$ is a coordinate function on $\bbA^1$.
Since $x_i^{q_i} = 0$ for every $i$, writing
\begin{align*}
         X&=\sum Y_{i_1,\ldots,i_r} x_1^{i_1}x_2^{i_2}\ldots x_r^{i_r}
         \end{align*}
and expanding 
\begin{align*}
X^{q_l}  &=  \sum Y_{i_1,\ldots,i_r}^{q_l} x_1^{{q_l}i_1}x_2^{{q_l}i_2}\ldots x_r^{{q_l}i_r} \\
            \end{align*}
    we see that the only monomials appearing in the above sum are those for which 
    \[ q_li_1<q_1, \; \, q_li_2<q_2, \; \, \ldots, \; \, q_li_r<q_r . \]
Thus $R_{\Lambda_{\e}/k}(\alpha_{q_l})$ is cut out (again, scheme-theoretically) in $R_{\Lambda_{\e}/k}(\mathbb{A}^1) \simeq \bbA(\Lambda_{\e})$
by
\begin{align*}
        Y_{i_1,\ldots,i_{l-1},0,\ldots,0}^{q_l}=0 \text{ for  $0\leqslant i_1 <\frac{q_1}{q_l}$, $\ldots$, $0\leqslant i_{l-1}<\frac{q_{l-1}}{q_{l}}$,}  
            \end{align*}
where $Y_{i_1, \dots, i_r}$ are the coordinates in $\bbA(\Lambda_{\e})$. In other words,  $R_{\Lambda_{\e}/k}(\alpha_{q_l})$ is the
subscheme of $R_{\Lambda_{\e}/k}(\mathbb{A}^1) \simeq \bbA_k(\Lambda_{\e}) \simeq \bbA_k^{p^{e_1} + \dots + e_r}$ cut out
(again, scheme-theoretically) by $q_l$th powers of 
\[ \frac{q_1}{q_l}\frac{q_2}{q_l}\ldots \frac{q_{l-1}}{q_l} =
  p^{s_l - l e_l} \]
distinct coordinate functions. The reduced scheme $R_{\Lambda_{\e}/k}(\alpha_{q_l})_{\rm red}$ is thus isomorphic to
an affine space of dimension $p^{e_1+ \dots + e_r} -\sum_{i=1}^r  p^{s_i - i e_i}$. On the other hand, since $q_l$ is a power of $p$, the Jacobian criterion tells us that
the tangent space to $R_{\Lambda_{\e}/k}(\alpha_{q_l})$ at any $k$-point is the same as the tangent space 
to $\bbA(\Lambda_{\e}) = \bbA^{p^{e_1 + \dots + e_r}}$, and
(a$'$), (b$'$), (c$'$) follow.
      \end{proof}

\begin{proof}[Conclusion of the proof of Proposition~\ref{prop.dimensions}]
 The automorphism group scheme $G_{\e}$ is the group of invertible elements in $\End(\Lambda_{\e})$. In other words, the natural diagram
 \[ \xymatrix{  G_{\e} \ar@{->}[r] \ar@{->}[d] & \GL_N \ar@{->}[d] \\ 
                 \End(\Lambda_{\e}) \ar@{->}[r] &  \Mat_{N\times N} }
\]
where $N=\dim(\Lambda_{\e})=p^{e_1+\ldots+e_r}$, is Cartesian. Hence, $G_{\e}$ is an open subscheme of $X_{\e}$.
Since $X_{\e}$ is irreducible, Proposition~\ref{prop.dimensions} follows from Lemma~\ref{lem.endomorphisms}.
This completes the proof of Proposition~\ref{prop.dimensions} and thus of Theorem~\ref{thm.main'}.
%
%
 \end{proof}
 
 \section{Alternative proofs of Theorem~\ref{thm.main}}
 \label{sect.alternatives} 
 
The proof of the lower bound of Theorem~\ref{thm.main} given in Section~\ref{sect.lower-bound} section is the only one we know.
However, we have two other proofs for the upper bound (Proposition~\ref{prop.upper-bound}), in addition to the one given in Section~\ref{sect.upper-bound}. In this section we will briefly outline these arguments for the interested reader.

Our first alternative proof of Proposition~\ref{prop.upper-bound} is based on an explicit construction of the versal 
algebra $A_{ver}$ of type $(n, \e)$ whose existence is asserted by Proposition~\ref{prop.versal}. 
This construction is via generators and relations, by taking ``the most general" structure constants in~\eqref{e.structure}.
Versality of $A_{ver}$ constructed this way takes some work to prove; however,
once versality is established,  it is easy to see directly that $A_{ver}$ is a field and thus 
\[ \tau(n, \e) = \ed(A_{ver}) \leqslant \trdeg(K_{ver}/k) = n \sum_{i=1}^r  p^{s_i - i e_i} . \]

Our second alternative proof of Proposition~\ref{prop.upper-bound} is based on showing that the natural representation of
$G_{n, \e}$ on $V = \Lambda_{n, \e}^r$ is generically free. Intuitively speaking, this is clear: $\Lambda_{n, e}$ is generated by $r$ elements
as a $k$-algebra, so $r$-tuples of generators of $\Lambda_{n, \e}$ are dense in $V$ and
have trivial stabilizer in $G_{n, \e}$. The actual proof involves checking that the stabilizer 
in general position is trivial scheme-theoretically and not just on the level of points. Once generic freeness of this linear action 
is established, the upper bound of Proposition~\ref{prop.upper-bound} follows from the inequality
\[ \ed(G_{n, \e}) \leqslant \dim(V) - \dim(G_{n, \e}) \,  \]
see, e.g., Proposition 4.11 in~\cite{berhuy-favi}. To deduce the upper bound of Proposition~\ref{prop.upper-bound} from this inequality,
recall that 

\smallskip
$\tau(n, \e) = \ed(G_{n, \e})$ (see Corollary~\ref{cor.tau}(b)),

\smallskip
$\dim(V) = r \dim(\Lambda_{n, \e}) = nr \dim(\Lambda_{\bf e}) = nr p^{e_1 + \dots + e_r}$ (clear from the definition), and

\smallskip
$\dim(G_{n, \e}) = n \dim(G_{\e}) = nr p^{e_1+ \dots + e_r} -  n \sum_{i=1}^r  p^{s_i - i e_i}$ (see Proposition~\ref{prop.dimensions}(b)).

\section{The case, where $e_1 = \dots = e_r$}
\label{sect.ee}

In the special case, where $n = 1$ and $e_1 = \dots = e_r$, Theorem~\ref{thm.main} tells us that $\tau(n, \e) = r$.
In this section, we will give a short proof of the following stronger assertion (under the assumption that $k$ is perfect).

\begin{proposition} \label{prop.inseparable}
Let $\e = (e, \dots, e)$ ($r$ times) and $L/K$ be purely inseparable extension of type $\e$, with $k \subset K$.
Assume that the base field $k$ is perfect. Then $\ed_p(L/K) = \ed(L/K) = r$.
\end{proposition}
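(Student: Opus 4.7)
The plan is to establish the two inequalities $\ed(L/K) \leqslant r$ and $\ed_p(L/K) \geqslant r$; combined with the trivial bound $\ed_p(L/K) \leqslant \ed(L/K)$, this will give both stated equalities.

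For the upper bound, I would apply Pickert's structure theorem (Theorem~1 in~\cite{rasala}) to a normal generating sequence $x_1, \dots, x_r$ of $L/K$. Because all $e_i$ equal $e$, the range $0 \leqslant d_j \leqslant p^{e - e} - 1$ of the summation in~\eqref{e.structure} collapses to $d_j = 0$, so each $x_i^{p^e}$ is simply a scalar $a_i \in K$. The structure constants of $L$ in the monomial basis $\{x_1^{i_1} \cdots x_r^{i_r}\}$ thus lie in $\{0, 1, a_1, \dots, a_r\}$, and Lemma~\ref{lem.structure-constants} lets $L$ descend to $K_0 = k(a_1, \dots, a_r)$. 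Since $\trdeg(K_0/k) \leqslant r$, we obtain $\ed(L/K) \leqslant r$.

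For the lower bound I would argue via K\"ahler differentials. A standard fact, consequence of Becker--MacLane's identity $[L : K(L^p)] = p^r$ (cf.\ Remark~\ref{rem.becker-maclane}), is that for any finite purely inseparable extension $L/K$ one has $\dim_L \Omega^1_{L/K} = r$. For any finite extension $K'/K$ of degree prime to $p$, base change gives $\Omega^1_{L'/K'} \cong \Omega^1_{L/K} \otimes_L L'$ where $L' = L \otimes_K K'$, so $\dim_{L'} \Omega^1_{L'/K'} = r$ as well. Now suppose $L'/K'$ descends to $L_0/K_0$ with $k \subset K_0 \subset K'$; after replacing $K_0$ by the subfield generated over $k$ by the finitely many structure constants of $L_0$, we may assume $K_0/k$ is finitely generated. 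Set $d = \trdeg(K_0/k)$. Because $k$ is perfect and $K_0/k$ is finitely generated, $K_0/k$ (and hence $L_0/k$) is separably generated, whence $\dim_{L_0} \Omega^1_{L_0/k} = \trdeg(L_0/k) = d$. The fundamental exact sequence
\[ \Omega^1_{K_0/k} \otimes_{K_0} L_0 \longrightarrow \Omega^1_{L_0/k} \longrightarrow \Omega^1_{L_0/K_0} \longrightarrow 0 \]
then yields $r = \dim_{L_0} \Omega^1_{L_0/K_0} \leqslant d$, so $\ed(L'/K') \geqslant r$. Minimizing over $K'$ gives $\ed_p(L/K) \geqslant r$.

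The main obstacle is the identity $\dim_L \Omega^1_{L/K} = r$ for purely inseparable $L/K$ of type $(e, \dots, e)$ and its invariance under base change and descent. Once this is in hand, the descent argument reduces to two well-known inputs: the perfectness of $k$ (which forces any finitely generated extension of $k$ to be separably generated, pinning down $\dim_{L_0} \Omega^1_{L_0/k} = d$) and the right-exactness of the fundamental sequence of differentials. The passage from $\ed$ to $\ed_p$ is essentially free, since purely inseparable type is preserved by separable base change of prime-to-$p$ degree, and the relevant differential dimension is transported along the tensor product.
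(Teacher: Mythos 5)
Your argument is correct, and the two halves compare differently with the paper. The upper bound is exactly the paper's proof: Pickert's theorem gives $x_i^{p^e}=a_i\in K$, and Lemma~\ref{lem.structure-constants} descends $L$ to $k(a_1,\dots,a_r)$. (One cosmetic slip: the structure constants in the monomial basis are \emph{products} of the $a_i$, not just the $a_i$ themselves, but they still lie in $K_0$.) The lower bound takes a genuinely different route. The paper first reduces $\ed_p$ to $\ed$ (noting $L\otimes_KK'$ remains a field of type $\e$ for prime-to-$p$ extensions $K'/K$), descends to a finitely generated base, and then derives a contradiction in Lemma~\ref{lem.inseparable} by comparing two computations of the imperfection degree: $[L:L^p]\geqslant[L:K(L^p)]=p^r$ from Remark~\ref{rem.becker-maclane}, versus $[L:L^p]=p^s$ with $s=\trdeg(K/k)<r$ from Becker--MacLane's Theorem~3 (this is where perfectness of $k$ enters). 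You replace this with the equivalent invariant $\dim_L\Omega^1_{L/K}=r$, transport it along base change and descent, and bound it by $\dim_{L_0}\Omega^1_{L_0/k}=\trdeg(K_0/k)$ using the first fundamental exact sequence, with perfectness of $k$ entering through separable generation of $L_0/k$. The substance is the same ($[L:L^p]$ and $\dim_L\Omega^1_{L/k}$ carry the same information), but your version trades the citation of Becker--MacLane for standard facts about K\"ahler differentials, and it handles $\ed_p$ directly rather than via the paper's preliminary reduction. Two points you should make explicit to close the argument: $L'=L\otimes_KK'$ is a field (a degree count comparing $p$-parts, as the paper notes), and $L_0$ is a field (it injects into $L'$ by faithful flatness of $K'$ over $K_0$), both of which are needed before you may speak of $\dim_{L'}\Omega^1_{L'/K'}$ and of $L_0/k$ being separably generated.
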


The assumption that $k$ is perfect is crucial here. Indeed, by Lemma~\ref{lem.existence}(b), there exists
a field extension $L/K$ of type $\e$. Setting $k = K$, we see that $\ed(L/K) = 0$, and the proposition fails.

The remainder of this section will be devoted to proving Proposition~\ref{prop.inseparable}.
We begin with two reductions. 

\smallskip
(1) It suffices to show that 
\begin{equation} \label{e.insep0}
\text{$\ed(L/K) = r$ for every field extension $L/K$ of type $\e$;}
\end{equation}
the identity $\ed_p(L/K)$ will then follow. Indeed,
$\ed_p(L/K)$ is defined as the minimal value of $\ed(L'/K')$ taken over all finite extensions $K'/K$ of degree prime to $p$.
Here $L' = L \otimes_K K'$. Since $[L:K]$ is a power of $p$, $L'$ is a field, so~\eqref{e.insep0} tells us that $\ed(L'/K') = r$.

\smallskip
(2) The proof of the upper bound,
\begin{equation} \label{e.insep1} \ed(L/K) \leqslant r  \end{equation}
is the same as in Section~\ref{sect.upper-bound}, but in this special case the argument is much simplified.
For the sake of completeness we reproduce it here.
Let $x_1, \dots, x_r$ be a normal generating sequence for $L/K$. 
By a theorem of Pickert (Theorem 1 in~\cite{rasala}), $x_1^{q}, \dots, x_r^q \in K$, where $q = p^e$. Set $a_i = x_i^q$ and $K_0 = k(a_1, \dots, a_r)$. The structure constants of $L$
relative to the $K$-basis $x_1^{d_1} \dots x_r^{d_r}$ of $L$, with $0 \leqslant d_1, \dots, d_r \leqslant q-1$ all lie in $K_0$.
Clearly $\trdeg(K_0/k) \leqslant r$; the inequality~\eqref{e.insep1} now follows from Lemma~\ref{lem.structure-constants}.

\smallskip
It remains to prove the lower bound, $\ed(L/K) \geqslant r$. Assume the contrary: $L/K$ descends to $L_0/K_0$ with
$\trdeg(K_0/k) < r$. By Lemma~\ref{lem.structure-constants}, $L_0/K_0$ further descends to $L_1/K_1$, where $K_1$ is finitely generated over $k$.
By Lemma~\ref{lem.type}, $L_1/K_1$ is a purely inseparable extension of type $\e$. After replacing $L/K$ by $L_1/K_1$, 
it remains to prove the following:

\begin{lemma} \label{lem.inseparable} Let $k$ be a perfect field and $K/k$ be a finitely generated field extension of transcendence degree $< r$.
There there does not exist a purely inseparable field extension $L/K$ of type $\e = (e_1, \dots, e_r)$, where $e_1 \geqslant \dots \geqslant e_r \geqslant 1$.
\end{lemma}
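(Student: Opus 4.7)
The plan is to derive a contradiction by comparing two degrees attached to $L$, namely $[L:K(L^p)]$ and $[L:L^p]$.

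First, I would observe that since $L/K$ is finite and $K/k$ is finitely generated, $L/k$ is itself finitely generated, and because $L/K$ is algebraic we have $\trdeg(L/k) = \trdeg(K/k) =: d < r$. The key input is the standard fact that if $k$ is perfect and $L/k$ is finitely generated of transcendence degree $d$, then $L/k$ admits a separating transcendence basis, and any such basis is a $p$-basis of $L$ over $L^p$; in particular
\[
[L : L^p] = p^d.
\]
I would invoke this as a black-box from the theory of $p$-bases (e.g.\ Matsumura, \emph{Commutative Ring Theory}, \S 26), since the paper already uses analogous structural results of Pickert and Becker--MacLane without reproof.

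Next, since $L^p \subseteq K(L^p) \subseteq L$, the multiplicativity of degrees gives
\[
[L : K(L^p)] \;\leqslant\; [L : L^p] \;=\; p^d \;<\; p^r.
\]
On the other hand, Remark~\ref{rem.becker-maclane} (quoting Becker--MacLane) tells us that for a purely inseparable extension $L/K$ of type $\e = (e_1,\dots,e_r)$ one has
\[
[L : K(L^p)] \;=\; p^r,
\]
where $r$ is precisely the length of a normal generating sequence. Combining these two inequalities yields $p^r \leqslant p^d < p^r$, which is the desired contradiction. This completes the proof of Lemma~\ref{lem.inseparable}, and hence of Proposition~\ref{prop.inseparable}.

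The main (and essentially only) non-trivial point is the identity $[L:L^p] = p^{\trdeg(L/k)}$ for $L/k$ finitely generated with $k$ perfect; everything else is a one-line manipulation of field degrees together with the already-cited Becker--MacLane formula. I do not anticipate any other obstacle: there is no need to analyze the individual invariants $e_i$, because $r$ alone (the length of the normal generating sequence) already forces the degree $[L:K(L^p)]$ to exceed what the transcendence degree of $K/k$ can accommodate.
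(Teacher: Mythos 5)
Your proof is correct and is essentially the paper's own argument: both compare $[L:K(L^p)]=p^r$ (Becker--MacLane, via Remark~\ref{rem.becker-maclane}) with $[L:L^p]=p^{\trdeg(L/k)}<p^r$, the latter relying on $k$ being perfect. The only cosmetic difference is that you cite the identity $[L:L^p]=p^{\trdeg(L/k)}$ from the theory of $p$-bases, while the paper derives it from Theorem~3 of Becker--MacLane.
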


\begin{proof} Assume the contrary. Let $a_1, \dots, a_s$ be a transcendence basis for $K/k$. That is, $a_1, \dots, a_s$ are algebraically independent over $k$,
$K$ is algebraic and finitely generated (hence, finite) over $k(a_1, \dots, a_s)$ and $s \leqslant r-1$.
By Remark~\ref{rem.becker-maclane},
\begin{equation} \label{e.inseparable2} [L: L^p] \geqslant [L: (L^p \cdot K)] = p^r. \end{equation}
On the other hand, since $[L: k(a_1, \dots, a_s)] < \infty$, Theorem~3 in~\cite{becker-maclane}
tells us that
\begin{equation} \label{e.inseparable3}
[L: L^p] = [k(a_1, \dots, a_s): k(a_1, \dots, a_s)^p] = [k(a_1, \dots, a_s): k(a_1^p, \dots, a_s^p)] =  p^s < p^r . \end{equation}
Note that the second equality relies on our assumption that $k$ is perfect. The contradiction between~\eqref{e.inseparable2} 
and~\eqref{e.inseparable3} completes the proof of Lemma~\ref{lem.inseparable} and thus of Proposition~\ref{prop.inseparable}.
\end{proof}

\section*{Acknowledgements} We are grateful to Madhav Nori, Julia Pevtsova, Federico Scavia and Angelo Vistoli 
for stimulating discussions.

\bibliographystyle{amsalpha}


\end{document}